\tikzstyle{vertex}=[circle, draw, inner sep=0pt, minimum size=6pt]
\theoremstyle{plain}
\newtheorem{theorem}{Theorem}
\newtheorem{lemma}{Lemma}
\newtheorem{ob}{Observation}
\theoremstyle{definition}
\newtheorem{definition}{Definition}
\theoremstyle{remark}
\numberwithin{equation}{section} % to get equations numbered
\begin{document}
\title[On non-cyclic graph of a group]{On non-cyclic graph of a group} % please provide
                                % an abbreviated title

\author{E. Vatandoost}
\address{IKI University\\ Faculty of Sciences, Department of Mathematics\\
  Qazvin\\ Iran}

\email{vatandoost@sci.ikiu.ac.ir}

\thanks{* We thank Professor Saeed Akbari for his comments and expert of our paper.}

\author{Y. Golkhandypour}

\address{IKI University\\ Faculty of Sciences, Department of Mathematics\\
  Qazvin\\ Iran}

%\curraddr{Other Institution\\ Some Other Department\\ P.~O. Box 0002 \\ 12347
 % City\\ Other Country}

\email{y.golkhandypour@edu.ikiu.ac.ir}

\begin{abstract}
   Here we study some algebraic propertices of non-cyclic graphs. In this paper we show that $\overline{\Gamma}_G$ is isomorphic to $K_3\cup (n-4)K_1$ or $K_4\cup (n-5)K_1$ if and only if $G$ is isomorphic to $D_8$ or $D_{10}$, respectively. We characterize all groups of order $n$ like $G$ in which   $\gamma(\Gamma_G)+\gamma(\overline{\Gamma}_G)\in \{n, n-1, n-2, n-3\}$, where $|Cyc(G)|=1$.
\end{abstract}

%\dedicatory{This paper is dedicated to Professor X on his 125th birthday.}

\subjclass{ Algebraic combinatorics: 05Exx, Graph theory: 05Cxx}

\keywords{Finite group, Acceptable group, Non-cyclic graph, Domination number.}

\maketitle

 \section{\bf Introduction}
 The study of algebraic structures, using the properties of graphs, becomes an exciting research topic in the last twenty years, leading to many fascinating results and questions. There are many papers on assigning a graph to a ring or group and investigation of algebraic properties of ring or group using the associated graphs.
 \newline
 Before starting let us introduce some necessary notation and definitions.
Let $\Gamma= (V,E)$ be a simple graph. A graph $\Gamma$ is said to be $connected$ if each pair of vertices are joined by a walk. The number of edges of the shortest walk joining $v$ and $u$ is called the $distance$ between $v$ and $u$ and denoted by $d(u, v)$. The maximum value of the distance function in a connected graph $\Gamma$ is called the $diameter$ of $\Gamma$ and denoted by $diam(\Gamma)$.
The degree of a vertex $v\in V(\Gamma)$ is the number of edges which are adjacent to $v$ in $\Gamma$ and denoted by $deg_{\Gamma}(v)$. A universal vertex is a vertex that is adjacent to every other vertices in the graph. The maximum degree of a graph $\Gamma$ denoted by $\Delta(\Gamma)$, and the minimum degree of a graph $\Gamma$ denoted by $\delta(\Gamma)$, are the maximum and minimum degree of its vertices.
\newline
A {\it dominating set} for $\Gamma$ is a subset D of $V(\Gamma)$ such that every vertex not in D is adjacent to at least one vertex of D.
The domination number of $\Gamma$ is the number of vertices in a smallest dominating set for $\Gamma$ and denoted by $\gamma(\Gamma)$. See \cite{111}, \cite{113} and \cite{114} for more details.
\newline
Let $G$ be a non-locally cyclic group and let $Z(G)$ denoted the centralizer of $G$. If $x\in G$, then $Cyc_G(x)$ denotes the cyclicizer of $x$ in $G$ and defined by
\begin{equation*}
 Cyc_G(x)=\{y\in G \mid \langle x,y\rangle~ is~ cyclic\}.
\end{equation*}
The cyclicizer of $G$ denoted by $Cyc(G)$ such that
\begin{equation*}
  Cyc(G)=\{y\in G \mid \langle x,y\rangle ~is ~cyclic~ for ~all~ x\in G\}.
\end{equation*}
It is a simple fact that for any group $G$, $Cyc(G)$ is a locally cyclic subgroup of $G$. Also in general for an element $x$ of a group $G$, $Cyc_G(x)$ is not a subgroup of $G$. Also for each $x\in G$, $C_G(x)$ is used for the centralizer of $x$.
\newline
In this paper, we study some graph properties of the non-cyclic graph $\Gamma_G$ of non-locally cyclic group $G$.
Let $G$ be a non-locally cyclic group of order $n$. The non-cyclic graph of $G$, denoted by $\Gamma_G$, is a graph with vertex set $G\setminus Cyc(G)$ and two vertices $x$ and $y$ are adjacent if and only if $\langle x,y \rangle$ is not cyclic. Also the complement of non-cyclic graph of $G$ is denoted by $\overline{\Gamma}_G$.
Note that the degree of a vertex $x\in V(\Gamma_G)$ in the non-cyclic graph $\Gamma_G$ is equal to $|G\setminus Cyc_G(x)|$.
A. Abdollahi and A. Mohammadi Hassanabadi in \cite{131} show that $\Gamma_G$ is connected and $diam(\Gamma_G)\leq 3$. In addition, they show that $diam(\Gamma_G)=1$ if and only if $G$ is an elementary Abelian $2$-group. Also, in \cite{132} they characterize groups whose non-cyclic graphs have clique numbers at most 4. See \cite{131} and \cite{132} for more details.
\newline
Xuang L. Ma and others in \cite{11111111}, consider The cyclic graph of a finite group $G$. They consider some properties of cyclic graph and characterize certain finite groups whose cyclic graphs have some properties.
\newline
In this paper we show that $\overline{\Gamma}_G$ is isomorphic to $K_3\cup (n-4)K_1$ or $K_4\cup (n-5)K_1$ if and only if $G$ is isomorphic to $D_8$ or $D_{10}$, respectively. Also, we characterize all groups of order $n$ like $G$ in which   $\gamma(\Gamma_G)+\gamma(\overline{\Gamma}_G)\in \{n, n-1, n-2, n-3\}$, where $|Cyc(G)|=1$.  In addition, we prove that $\gamma(\overline{\Gamma}_G)=\frac{n-1}{2}$ if and only if $\overline{\Gamma}_G$ is the union of $\frac{n-1}{2}$ copies of $P_2$.
%\end{enumerate}

%------------------------------------------------------------------------------------%

\section{\bf On non-cyclic graph}
\vskip 0.4 true cm
First we consider some algebraic properties of non-cyclic graph and give some facts that are needed in the next sections. In this paper, we denoted by $A_n$ and $D_{2n}$, alternating group of order $n$ and dihedral group of order $2n$, respectively.
\vskip 0.4 true cm

\begin{theorem} \cite{131} \label{62} Let $G$ be a non-locally cyclic group. Then $\Gamma_G$ is connected and $diam(\Gamma_G)\leq 3$.
\end{theorem}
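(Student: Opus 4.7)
The plan is to bound $diam(\Gamma_G)$ by~$3$, from which connectedness of $\Gamma_G$ is automatic. I would fix arbitrary vertices $x,y\in V(\Gamma_G)=G\setminus Cyc(G)$ and construct an $x$-to-$y$ walk of length at most $3$ by a short case analysis on the subgroup $\langle x,y\rangle$.

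If $\langle x,y\rangle$ is non-cyclic, then $xy$ is already an edge of $\Gamma_G$ and $d(x,y)=1$. So suppose $\langle x,y\rangle$ is cyclic. Since $x,y\notin Cyc(G)$, the definition of the cyclicizer supplies elements $a,b\in G$ with $\langle x,a\rangle$ and $\langle y,b\rangle$ both non-cyclic; in particular $a,b\notin Cyc(G)$, so $a,b\in V(\Gamma_G)$. If $\langle a,y\rangle$ is non-cyclic then the walk $x$--$a$--$y$ has length $2$, and symmetrically if $\langle b,x\rangle$ is non-cyclic. The only remaining scenario is that $\langle a,y\rangle$ and $\langle b,x\rangle$ are both cyclic.

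To handle this last case I would introduce an auxiliary vertex. Write $\langle a,y\rangle=\langle c\rangle$; then $a,y\in\langle c\rangle$, so $\langle c,x\rangle\supseteq\langle a,x\rangle$ and $\langle c,b\rangle\supseteq\langle y,b\rangle$. Because every subgroup of a cyclic group is cyclic, the non-cyclicity of the right-hand sides forces the left-hand sides to be non-cyclic as well. Hence $c\sim x$ and $c\sim b$ in $\Gamma_G$; the relation $\langle c,x\rangle$ non-cyclic further certifies that $c\notin Cyc(G)$, so $c\in V(\Gamma_G)$. Together with the edge $b\sim y$ built into the choice of $b$, this produces the walk $x$--$c$--$b$--$y$ of length $3$. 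A quick check that $c$ is distinct from $x,b,y$ falls out because any such equality would contradict one of the non-cyclicity statements just derived.

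The main obstacle, as I see it, is precisely this last subcase: the naive choice of witnesses $a$ and $b$ does not by itself yield a length-$3$ walk, and the trick of replacing $a$ by a generator of the cyclic group $\langle a,y\rangle$ is essential. Beyond this, the only group-theoretic input used is the elementary fact that subgroups of cyclic groups are cyclic.
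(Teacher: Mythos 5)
The paper states this theorem as a citation of \cite{131} and gives no proof of its own, so there is nothing internal to compare against; judged on its own merits, your argument is correct and is essentially the standard proof from \cite{131}. The key step --- replacing the witness $a$ by a generator $c$ of the cyclic group $\langle a,y\rangle$ so that $\langle a,x\rangle\leq\langle c,x\rangle$ and $\langle y,b\rangle\leq\langle c,b\rangle$ force both to be non-cyclic --- is exactly what is needed, and the only facts used (that $a,b,c\notin Cyc(G)$ follows from their non-cyclic pairings, and that subgroups of cyclic groups are cyclic) are handled correctly.
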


\begin{theorem} \cite{131} \label{60} Let $G$ be a non-locally cyclic group. Then $diam(\Gamma_G)=1$ if and only if $G$ is an elementary Abelian $2$-group.
\end{theorem}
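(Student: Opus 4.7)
The plan is to handle the two directions separately, with the converse relying on a two-step analysis of the vertex set and of $Cyc(G)$.

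For the forward direction, assume $G$ is a non-locally cyclic elementary abelian $2$-group, so $|G|\ge 4$. Any two distinct nonidentity elements $x,y$ of $G$ satisfy $\langle x,y\rangle=\{1,x,y,xy\}\cong \mathbb{Z}_2\times\mathbb{Z}_2$, which is not cyclic. Hence $Cyc(G)=\{1\}$, $V(\Gamma_G)=G\setminus\{1\}$, and every pair of vertices is adjacent, so $\Gamma_G$ is complete and $diam(\Gamma_G)=1$.

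For the converse, assume $\Gamma_G$ is complete on $V:=G\setminus Cyc(G)$. First I would show every $x\in V$ is an involution: since $\langle x,x^{-1}\rangle=\langle x\rangle$ is cyclic, $x$ and $x^{-1}$ are non-adjacent in $\Gamma_G$, so by completeness either $x^{-1}=x$ or $x^{-1}\notin V$; the latter is impossible, since $Cyc(G)$ is a subgroup and $x^{-1}\in Cyc(G)$ would force $x\in Cyc(G)$. Next I would show $Cyc(G)=\{1\}$. Suppose instead that some $z\in Cyc(G)\setminus\{1\}$ exists, and pick $x\in V$, which is nonempty because $G$ is non-locally cyclic. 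Since $\langle x,z\rangle$ is cyclic, it is abelian, so $xz=zx$; also $xz\in V$, for otherwise $x=(xz)z^{-1}\in Cyc(G)$. Applying the first step to both $x$ and $xz$ yields $x^2=1$ and $1=(xz)^2=x^2z^2=z^2$, so every element of $G$ has order at most $2$, making $G$ elementary abelian. But then the forward argument gives $Cyc(G)=\{1\}$, contradicting $z\ne 1$. Combining the two steps, $V=G\setminus\{1\}$ is made up of involutions, so $G$ is elementary abelian.

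The main obstacle is the second step of the converse, where one must extract $z^2=1$ from $z\in Cyc(G)$. The crucial leverage is the inclusion $Cyc(G)\subseteq Z(G)$, which follows from $\langle x,z\rangle$ being cyclic (hence abelian), and which allows $(xz)^2$ to collapse to $x^2z^2=z^2$.
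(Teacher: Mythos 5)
Your argument is correct. Note that the paper itself offers no proof of this statement: it is quoted as Theorem~\ref{60} directly from \cite{131}, so there is nothing internal to compare against. Your forward direction is the standard computation that any two distinct involutions generate a Klein four-group, and your converse is sound: the non-adjacency of $x$ and $x^{-1}$ forces every vertex to be an involution (using that $Cyc(G)$ is a subgroup, a fact the paper records in the introduction), and your second step correctly exploits $Cyc(G)\subseteq Z(G)$ to get $(xz)^2=x^2z^2=z^2=1$ and hence, by contradiction with the forward direction, $Cyc(G)=\{1\}$. The only point worth making explicit is that $V(\Gamma_G)\neq\emptyset$ because $G$ is not locally cyclic while $Cyc(G)$ always is, which you do use when you ``pick $x\in V$''; with that noted, the proof is complete and self-contained.
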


\begin{lemma}\label{51} Let $G$ be a non-locally cyclic group and $|Cyc(G)|>1$. If $[G:Cyc(G)]=t$, then $t\geq 3$. Furthermore if $|Cyc(G)|\geq 3$, then $\delta(\overline{\Gamma}_G)\geq 2$.
\end{lemma}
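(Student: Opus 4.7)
\medskip

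\noindent\textbf{Proof plan.} The two assertions are essentially independent, and both rest on exploiting the defining property of $Cyc(G)$: namely, if $c\in Cyc(G)$ then $\langle z,c\rangle$ is cyclic for every $z\in G$, which forces in particular $zc=cz$.

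\emph{First part ($t\geq 3$).} Since $G$ is non-locally cyclic, $Cyc(G)\neq G$, so $t\geq 2$. I would rule out $t=2$ by contradiction. Assume $[G:Cyc(G)]=2$, pick $x\in G\setminus Cyc(G)$, so that $G=Cyc(G)\,\dot\cup\,x\,Cyc(G)$. I will show $x\in Cyc(G)$, which contradicts the choice of $x$. Take any $y\in G$. If $y\in Cyc(G)$, then $\langle x,y\rangle$ is cyclic by definition. If $y\notin Cyc(G)$, write $y=xc$ with $c\in Cyc(G)$; then $\langle x,y\rangle=\langle x,xc\rangle=\langle x,c\rangle$, which is cyclic since $c\in Cyc(G)$. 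Hence $\langle x,y\rangle$ is cyclic for every $y\in G$, so $x\in Cyc(G)$, a contradiction.

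\emph{Second part ($\delta(\overline{\Gamma}_G)\geq 2$ when $|Cyc(G)|\geq 3$).} Let $x\in V(\overline{\Gamma}_G)=G\setminus Cyc(G)$. The key observation is that $\langle x\rangle\cdot Cyc(G)$ is a subgroup contained in $Cyc_G(x)$. Indeed, for every $c\in Cyc(G)$ the subgroup $\langle x,c\rangle$ is cyclic, hence abelian, so $x$ commutes with every element of $Cyc(G)$; therefore $\langle x\rangle$ and $Cyc(G)$ commute elementwise and their product is a subgroup. Moreover, any $x^kc$ in this product satisfies $\langle x, x^kc\rangle=\langle x,c\rangle$, which is cyclic, so $\langle x\rangle\,Cyc(G)\subseteq Cyc_G(x)$.

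Write $s=|Cyc(G)|$. Since $x\notin Cyc(G)$ but $x\in\langle x\rangle$, the intersection $\langle x\rangle\cap Cyc(G)$ is a proper subgroup of $\langle x\rangle$, so $[\langle x\rangle:\langle x\rangle\cap Cyc(G)]\geq 2$, giving
\begin{equation*}
|\langle x\rangle\,Cyc(G)|=\frac{|\langle x\rangle|\cdot|Cyc(G)|}{|\langle x\rangle\cap Cyc(G)|}\geq 2s.
\end{equation*}
The neighbors of $x$ in $\overline{\Gamma}_G$ are exactly the elements of $Cyc_G(x)\setminus\bigl(Cyc(G)\cup\{x\}\bigr)$; since $Cyc(G)\subsetneq Cyc_G(x)$ and $x\in Cyc_G(x)\setminus Cyc(G)$, this set has cardinality $|Cyc_G(x)|-s-1\geq 2s-s-1=s-1\geq 2$. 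Hence $\deg_{\overline{\Gamma}_G}(x)\geq 2$ for every vertex $x$, as required.

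\emph{Main obstacle.} The only non-obvious step is recognizing that $\langle x\rangle\,Cyc(G)$ sits inside $Cyc_G(x)$ and forms a subgroup of order at least $2|Cyc(G)|$; once this is in place, both the index argument and the degree bound fall out immediately.
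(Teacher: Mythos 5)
Your proof is correct, and both halves take a genuinely different route from the paper's, even though the underlying computation is the same. For $t\geq 3$, the paper also uses that $\langle gg_1,gg_2\rangle\leq\langle g,g_1,g_2\rangle$ is cyclic for $g_1,g_2\in Cyc(G)$, but it concludes that $\Gamma_G$ would be edgeless and gets its contradiction from the external fact that $\Gamma_G$ is connected (Theorem \ref{62}); you instead show directly that $x$ would satisfy the defining property of $Cyc(G)$, which is more self-contained and avoids invoking the connectivity theorem altogether. For the degree bound, the paper observes that each coset $xCyc(G)$ induces a clique $K_{\ell}$ in $\overline{\Gamma}_G$ (again because $\langle xc_1,xc_2\rangle\leq\langle x,c_1,c_2\rangle$ is cyclic), which immediately gives $\delta(\overline{\Gamma}_G)\geq \ell-1\geq 2$; you instead verify that $\langle x\rangle Cyc(G)$ is a subgroup contained in $Cyc_G(x)$ of order at least $2|Cyc(G)|$ and count neighbours as $|Cyc_G(x)|-|Cyc(G)|-1$. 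Both arguments are sound and yield the same numerical bound $\deg_{\overline{\Gamma}_G}(x)\geq |Cyc(G)|-1$; the paper's coset-clique observation is the leaner way to get exactly what the lemma asserts, while your product-subgroup formulation packages the same information in a form ($\langle x\rangle Cyc(G)\subseteq Cyc_G(x)$) that is potentially reusable elsewhere, at the cost of the extra commuting and subgroup verifications.
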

\begin{proof}
%Let $x\notin Cyc(G)$ and $xCyc(G)$ be a coset of $Cyc(G)$ in $G$. Then induced subgraph on $xCyc(G)$ is empty. Now let
Let $t=2$. Then there is $g\in G\setminus Cyc(G)$ such that $G=Cyc(G)\cup gCyc(G)$. So $V(\Gamma_G)=gCyc(G)$. Also for each $g_1, g_2\in Cyc(G)$, $\langle gg_1, gg_2\rangle\leq \langle g, g_1, g_2\rangle$ which is a cyclic subgroup of $G$. Hence $\Gamma_G$ is empty which contradicts the fact that $\Gamma_G$ is a connected graph. Therefore $t\geq 3$. If $\ell=|Cyc(G)|\geq 3$, then for each $x\notin Cyc(G)$, induced subgraph on $xCyc(G)$ is isomorphic to $K_{\ell}$ in $\overline{\Gamma}_G$.
\end{proof}

\begin{lemma}\label{92} Let $G$ be a non-locally cyclic group of order $n$ and $|V(\Gamma_G)|=k$. Then $k<n<2k$.
\end{lemma}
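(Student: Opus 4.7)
The plan is to use the identity $k = |V(\Gamma_G)| = |G \setminus Cyc(G)| = n - |Cyc(G)|$ and translate the two inequalities into bounds on $|Cyc(G)|$. The upper bound $k < n$ is equivalent to $|Cyc(G)| \geq 1$, while the lower bound $n < 2k$ is equivalent to $|Cyc(G)| < n/2$, i.e.\ $[G : Cyc(G)] > 2$.

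For $k < n$, I would simply observe that $e \in Cyc(G)$ (since $\langle e, x \rangle = \langle x \rangle$ is cyclic for every $x \in G$), so $|Cyc(G)| \geq 1$ and $k \leq n-1 < n$.

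For $n < 2k$, I would split according to $|Cyc(G)|$. If $|Cyc(G)| = 1$, then $k = n-1$, so $2k = 2n - 2 > n$ as soon as $n > 2$; since $G$ is non-locally cyclic (hence, in the finite case, non-cyclic), the smallest possibility is $G \cong \mathbb{Z}_2 \times \mathbb{Z}_2$ with $n = 4$, so certainly $n \geq 4 > 2$. If instead $|Cyc(G)| \geq 2$, i.e.\ $|Cyc(G)| > 1$, then Lemma~\ref{51} gives $[G : Cyc(G)] \geq 3$, so $|Cyc(G)| \leq n/3 < n/2$, which yields $k = n - |Cyc(G)| > n/2$, and hence $2k > n$, as required.

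The argument is essentially a two-line case split, with the only nontrivial ingredient being the index bound $[G:Cyc(G)] \geq 3$ from Lemma~\ref{51}; the potential pitfall is forgetting the boundary case $|Cyc(G)| = 1$, where that lemma does not apply and one must instead invoke the lower bound $n \geq 4$ on the order of any non-locally cyclic (finite) group.
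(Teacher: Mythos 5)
Your proof is correct and follows essentially the same route as the paper: write $k = n - |Cyc(G)|$ and derive $n < 2k$ from the index bound $[G:Cyc(G)] \geq 3$ of Lemma~\ref{51}. In fact you are slightly more careful than the paper, which cites Lemma~\ref{51} without noting that its hypothesis $|Cyc(G)|>1$ excludes the case $|Cyc(G)|=1$; your separate treatment of that case (where $k=n-1$ and $n\geq 4$ suffice) closes a small gap the paper leaves implicit.
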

\begin{proof}
Let $|V(\Gamma_G)|=k$. Then $n-|Cyc(G)|=k$. By Lemma \ref{51}, $|Cyc(G)|<\frac{n}{2}$. Hence $k<n<2k$.
\end{proof}

\begin{ob}\label{6558} Let $G$ be a non-locally cyclic group and $x\in V(\Gamma_G)$. Then $x^{-1}\in V(\Gamma_G)$ and $N_{\Gamma_G}(x)=N_{\Gamma_G}(x^{-1})$.
\end{ob}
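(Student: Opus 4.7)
The entire observation reduces to the single algebraic fact that $\langle x,y\rangle=\langle x^{-1},y\rangle$ for every $y\in G$, since $x^{-1}\in\langle x\rangle$ and $x\in\langle x^{-1}\rangle$. I would begin the proof by isolating this identity explicitly, because every subsequent claim follows from it by unwrapping the definitions of $Cyc(G)$ and of adjacency in $\Gamma_G$.

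For the first assertion, $x^{-1}\in V(\Gamma_G)$, I would argue by contradiction: if $x^{-1}\in Cyc(G)$, then $\langle x^{-1},g\rangle$ is cyclic for every $g\in G$; by the identity above, $\langle x,g\rangle$ is cyclic for every $g\in G$, forcing $x\in Cyc(G)$, which contradicts $x\in V(\Gamma_G)=G\setminus Cyc(G)$.

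For the neighborhood equality, I would take $y\in N_{\Gamma_G}(x)$, so $y\in V(\Gamma_G)$, $y\neq x$, and $\langle x,y\rangle$ is non-cyclic. A small but necessary intermediate remark: $y\neq x^{-1}$, since $\langle x,x^{-1}\rangle=\langle x\rangle$ is cyclic and hence $x^{-1}$ is never a neighbor of $x$. Then, using $\langle x,y\rangle=\langle x^{-1},y\rangle$, the subgroup $\langle x^{-1},y\rangle$ is also non-cyclic, so $y\in N_{\Gamma_G}(x^{-1})$. The reverse inclusion follows by the symmetric argument applied to $x^{-1}$ in place of $x$ (using $(x^{-1})^{-1}=x$).

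I do not anticipate a genuine obstacle; the only point that requires any care is remembering that $\Gamma_G$ is a simple graph with no loops, so one must verify that $x^{-1}$ cannot accidentally appear as a neighbor of $x$ when tracking the set-theoretic equality $N_{\Gamma_G}(x)=N_{\Gamma_G}(x^{-1})$. This is handled by the observation that $\langle x,x^{-1}\rangle$ is cyclic.
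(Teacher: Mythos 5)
Your proof is correct: the identity $\langle x,y\rangle=\langle x^{-1},y\rangle$ (valid because each of $x$ and $x^{-1}$ generates the other) is exactly the fact that makes this statement an "observation," and the paper itself states it without any proof. Your write-up, including the check that $x^{-1}\notin N_{\Gamma_G}(x)$ since $\langle x,x^{-1}\rangle=\langle x\rangle$ is cyclic, is the natural and complete justification of what the authors leave implicit.
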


\begin{lemma}\label{58} Let $G$ be a non-locally cyclic group. Then $\delta(\Gamma_G)\geq 2$.
\end{lemma}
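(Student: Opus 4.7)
The plan is to show that every vertex $x \in V(\Gamma_G)$ has at least two distinct neighbors in $\Gamma_G$. Since the degree of $x$ equals $|G \setminus Cyc_G(x)|$, this amounts to exhibiting two elements $y, z \in G$ with $\langle x, y\rangle$ and $\langle x, z\rangle$ both non-cyclic and $y \neq z$.

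First, I would use the hypothesis $x \notin Cyc(G)$ to guarantee the existence of at least one $y \in G$ with $\langle x, y\rangle$ non-cyclic; this is immediate from the definition of $Cyc(G)$ and provides the first neighbor. Note that both $x \neq e$ and $y \neq e$, because $e \in Cyc(G) \subseteq Cyc_G(x)$, while $x \notin Cyc(G)$ and $y \notin Cyc_G(x)$.

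The second step is to manufacture a second neighbor from $y$ by an elementary algebraic trick: consider the element $z = xy$. Since $\langle x, xy\rangle = \langle x, y\rangle$ is non-cyclic, $z$ is also adjacent to $x$ in $\Gamma_G$. It remains to check that $z$ is distinct from $y$ (which fails only if $x = e$) and distinct from $x$ (which fails only if $y = e$); both are ruled out by the preceding paragraph. Hence $\{y, xy\} \subseteq N_{\Gamma_G}(x)$ with $|\{y,xy\}| = 2$, giving $\delta(\Gamma_G) \geq 2$.

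There is essentially no obstacle here: once the right auxiliary element ($xy$) is identified, everything reduces to checking that the required inequalities $x \neq e$ and $y \neq e$ hold, which follows from the very definitions of $V(\Gamma_G)$ and of $Cyc_G(x)$. An alternative route would be to invoke Observation~\ref{6558}: if the chosen neighbor $y$ satisfies $y \neq y^{-1}$, then $y^{-1}$ is a second neighbor, and the involution case can be handled by the same $xy$ trick. The direct approach via $xy$ is, however, uniform and avoids case distinction.
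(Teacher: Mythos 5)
Your proof is correct, but it takes a genuinely different and considerably shorter route than the paper. The paper argues by contradiction: assuming $\deg_{\Gamma_G}(x)=1$ with unique neighbour $y$, it first forces $O(y)=2$ (using the inverse trick, which breaks down exactly for involutions), then deduces $x\in Z(G)$ from $|Cyc_G(x)|>|G|/2$, shows $|G|$ is even, and finally counts the elements of order two to reach a contradiction --- an argument that implicitly uses finiteness of $G$. Your observation that $\langle x, xy\rangle=\langle x,y\rangle$ gives a second neighbour in one stroke: $xy$ is adjacent to $x$ whenever $y$ is, $xy\neq y$ since $x\neq e$, and $xy\neq x$ since $y\neq e$ (both because $e\in Cyc(G)\subseteq Cyc_G(x)$ while $x\notin Cyc(G)$ and $y\notin Cyc_G(x)$). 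The only point worth making explicit is that $xy$ is in fact a vertex, i.e.\ $xy\notin Cyc(G)$; this is immediate because $\langle x, xy\rangle$ is non-cyclic, so with the paper's formula $\deg_{\Gamma_G}(x)=|G\setminus Cyc_G(x)|$ everything closes up. Your argument is uniform (no case split on orders), avoids the order-counting machinery entirely, and works without assuming $G$ is finite, so it is strictly more general as well as more elementary than the paper's proof.
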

\begin{proof}
 Let $x,y\in V(\Gamma_G)$, $x$ be adjacent to $y$ in $\Gamma_G$ and $deg_{\Gamma_G}(x)=1$. We show that $O(y)=2$. On the contrary, let $O(y)\neq 2$. Then $y^{-1}\in V(\Gamma_G)$. Since $\langle x,y \rangle$ is not cyclic, $\langle x,y^{-1} \rangle$ is not cyclic, too. So $x$ and $y^{-1}$ are adjacent in $\Gamma_G$, which contradicts the fact that $deg_{\Gamma_G}(x)=1$. Hence $O(y)=2$.
 \newline
 Since $Cyc(x)\subseteq C_G(x)$ and $|Cyc(x)|>|G|/2$, $C_G(x)=G$ and so $x\in Z(G)$. Also, since $\langle x, y\rangle$ is not a cyclic group, $O(x)$ is even and so $|G|$ is even.
  \newline
 Assume that $g\notin Cyc(G)\cup \langle x\rangle\cup \{y\}$ in that $O(x)=t$ and $O(g)=\ell$. Since $\langle x, g\rangle\cong \mathbb{Z}_t\oplus\mathbb{Z}_{\ell}$ and $\langle x, g\rangle$ is cyclic, $gcd(t, \ell)=1$ and so $\ell$ is odd. Thus $G\setminus Cyc(G)$ has exactly two elements of order two which are $y$ and $x^{t/2}$. It is clear that $O(x^{t/2}y)=2$. Hence $x^{t/2}y=x^{t/2}$ or $x^{t/2}y=y$ and so $y=e$ or $x^{t/2}=e$, respectively, which is a contradiction. Therefore $\delta(\Gamma_G)\geq 2$.
\end{proof}

\begin{lemma}\label{63} Let $G$ be a non-locally cyclic group and $x,y\in V(\Gamma_G)$ such that $O(x)=O(y)=2$. Then $x$ and $y$ are adjacent in $\Gamma_G$.
\end{lemma}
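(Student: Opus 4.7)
The plan is to argue by contradiction, exploiting the fact that in a simple graph the vertices $x$ and $y$ are distinct, together with the classical property that a cyclic group contains at most one element of order $2$.

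First I would note that, by definition of a simple graph, $x \neq y$ as vertices of $\Gamma_G$, and we need only prove that $\langle x, y \rangle$ fails to be cyclic. Suppose, for contradiction, that $H := \langle x, y \rangle$ is cyclic, say $H = \langle z \rangle$. If $H$ has odd order, then it contains no element of order $2$, contradicting the presence of $x$. Hence $|H|$ is even, and a cyclic group of even order $2m$ contains exactly one element of order $2$, namely $z^{m}$. Thus $x = y = z^m$, which contradicts $x \neq y$.

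Therefore $\langle x, y \rangle$ is not cyclic, and by the definition of the non-cyclic graph, $x$ and $y$ are adjacent in $\Gamma_G$.

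The argument is essentially a one-line application of the structure of cyclic groups; there is no real obstacle, and no earlier lemma of the paper is needed. The only point worth being careful about is to invoke $x \neq y$, which is implicit in the statement that $x$ and $y$ are vertices and the question of adjacency is meaningful.
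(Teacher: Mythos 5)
Your proposal is correct and takes essentially the same route as the paper: assume $\langle x,y\rangle$ is cyclic and derive a contradiction from the fact that two distinct involutions cannot lie in a cyclic group (the paper phrases this as $\langle x,y\rangle$ being forced to be the non-cyclic group of order four, you phrase it via the uniqueness of the element of order $2$ in a cyclic group). Your version is, if anything, slightly more carefully worded than the paper's one-line proof.
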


\begin{proof} If $x$ and $y$ are not adjacent in $\Gamma_G$, then $\langle x,y \rangle$ is a cyclic subgroup of order four of $G$, which is wrong.
\end{proof}

\begin{lemma}\label{56} Let $G$ be a non-locally cyclic group and $x\in V(\Gamma_G)$. If $x$ is a universal vertex in $\Gamma_G$, then $O(x)=2$.
\end{lemma}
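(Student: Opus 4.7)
The proof will be by contradiction, exploiting the fact that $\langle x, x^{-1}\rangle = \langle x\rangle$ is always cyclic. The plan is to assume $O(x) \neq 2$ and derive a contradiction from the universality of $x$.

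First I would observe that $x \neq e$, since the identity lies in $Cyc(G)$ (because $\langle e, g\rangle = \langle g\rangle$ is cyclic for every $g \in G$), and hence $e \notin V(\Gamma_G)$. So $O(x) \geq 2$. Suppose for contradiction that $O(x) > 2$; then $x^{-1} \neq x$. By Observation \ref{6558}, $x^{-1} \in V(\Gamma_G)$, so $x^{-1}$ is a vertex of $\Gamma_G$ distinct from $x$.

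Since $x$ is assumed to be universal in $\Gamma_G$, it must be adjacent to $x^{-1}$, which by the definition of $\Gamma_G$ means that $\langle x, x^{-1}\rangle$ is non-cyclic. But $\langle x, x^{-1}\rangle = \langle x\rangle$, which is obviously cyclic. This contradiction forces $O(x) = 2$.

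The argument is entirely structural and there is no real obstacle; the only care needed is to rule out the trivial case $x = e$ before invoking Observation \ref{6558} to produce $x^{-1}$ as a genuinely distinct vertex. Once that is done, the identity $\langle x, x^{-1}\rangle = \langle x\rangle$ closes the argument immediately.
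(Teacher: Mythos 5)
Your proof is correct and follows exactly the same argument as the paper: assume $O(x)\neq 2$, note $x^{-1}$ is then a distinct vertex, and derive a contradiction from the fact that $\langle x,x^{-1}\rangle=\langle x\rangle$ is cyclic while universality would force adjacency. Your extra care in ruling out $x=e$ is a minor refinement the paper leaves implicit.
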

\begin{proof} On the contrary let $O(x)\neq 2$. Then $x^{-1}\in V(\Gamma_G)$. Since $\langle x,x^{-1} \rangle=\langle x \rangle$ is a cyclic subgroup of $G$, $x$ is not adjacent to $x^{-1}$ in $\Gamma_G$ which contradicts the fact that $x$ is a universal vertex in $\Gamma_G$.
\end{proof}
\begin{lemma}\label{53} Let $G$ be a non-locally cyclic group. Then in $\overline{\Gamma}_G$, the vertices of degree $1$ occur only at the edges.
\end{lemma}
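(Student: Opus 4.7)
The plan is to show that whenever a vertex $x$ has $\deg_{\overline{\Gamma}_G}(x)=1$, its unique neighbour must also have degree $1$ in $\overline{\Gamma}_G$; this is what the statement asserts, namely that a degree-$1$ vertex of $\overline{\Gamma}_G$ can only sit at the end of an isolated pendant edge. I would split on whether $O(x)=2$ and in each case exploit the inversion symmetry encoded in Observation \ref{6558}.

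First I would rule out the case $O(x)=2$. Assume $\deg_{\overline{\Gamma}_G}(x)=1$ with unique neighbour $y$, so $\langle x,y\rangle$ is cyclic and $y\neq x$. If $O(y)\neq 2$, Observation \ref{6558} gives $y^{-1}\in V(\Gamma_G)$ with $y^{-1}\neq y$, and $\langle x,y^{-1}\rangle\leq\langle x,y\rangle$ is cyclic, so $y^{-1}$ is adjacent to $x$ in $\overline{\Gamma}_G$; since $O(y^{-1})=O(y)\neq O(x)$ we also have $y^{-1}\neq x$, contradicting $\deg_{\overline{\Gamma}_G}(x)=1$. If instead $O(y)=2$, then the cyclic group $\langle x,y\rangle$ would contain two distinct involutions $x$ and $y$, which is impossible. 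Hence $O(x)\neq 2$.

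Once $O(x)\neq 2$, both $x$ and $x^{-1}$ lie in $V(\Gamma_G)$ and are distinct, and $\langle x,x^{-1}\rangle=\langle x\rangle$ is cyclic, so $x$ and $x^{-1}$ are adjacent in $\overline{\Gamma}_G$. The uniqueness of $x$'s neighbour forces $y=x^{-1}$. Since Observation \ref{6558} gives $N_{\Gamma_G}(x)=N_{\Gamma_G}(x^{-1})$, the two vertices have equal degree in $\Gamma_G$, hence in $\overline{\Gamma}_G$; in particular $\deg_{\overline{\Gamma}_G}(x^{-1})=1$ with unique neighbour $x$, so $\{x,x^{-1}\}$ is a pendant edge of $\overline{\Gamma}_G$ both of whose endpoints have degree $1$. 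The one delicate point of the argument is the order-$2$ case, where the inversion trick collapses because $x=x^{-1}$; it is closed off by using that any cyclic group contains at most one involution.
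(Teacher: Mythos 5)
Your proof is correct and follows essentially the same route as the paper: rule out $O(x)=2$ via the inverse of the neighbour $y$, then observe that $x^{-1}$ is a vertex adjacent to $x$ in $\overline{\Gamma}_G$, force $y=x^{-1}$ by uniqueness, and conclude with Observation \ref{6558}. The only difference is that the paper additionally pins down $O(x)\in\{3,4\}$ (by ruling out $O(x)\geq 5$ using $x^2$ and $x^3$), a side fact not needed for the statement itself but invoked later in the proof of Theorem \ref{71}.
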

\begin{proof}
Let $x\in V(\Gamma_G)$, $deg_{\overline{\Gamma}_G}(x)=1$ and $x$ is adjacent to $y$ in $\overline{\Gamma}_G$. We claim that $O(x)\in \{3,4\}$. For proof, let $O(x)=2$. Then by Lemma \ref{63}, $O(y)\neq 2$. Since $\langle x,y\rangle$ is cyclic, $\langle x, y^{-1}\rangle$ is cyclic subgroup of $G$, too. Hence $y^{-1}$ is adjacent to $x$ in $\overline{\Gamma}_G$, which contradicts the fact that $deg_{\overline{\Gamma}_G}(x)=1$. Hence $O(x)\neq 2$.
\newline
Obviously, $x^{-1}\notin Cyc(G)$. If $O(x)\geq 5$ and $x^2, x^3\in Cyc(G)$, then $x\in Cyc(G)$, which is false. Thus $x^2$ or $x^3$ are not in $Cyc(G)$ and so $deg_{\overline{\Gamma}_G}(x)\geq 2$, which is a contradiction. Hence $O(x)=3$ or $4$. If $O(x)=4$, then $x^2\in Cyc(G)$ and so $|Cyc(G)|>1$. In any way, $y=x^{-1}$. By observation \ref{6558}, $deg_{\overline{\Gamma}_G}(y)=1$. This completed the proof.
\end{proof}

\begin{lemma}\label{67} Let $G$ be a non-locally cyclic Abelian group. Then $\overline{\Gamma}_G$ does not have both an isolated vertex and a vertex of degree 1.
\end{lemma}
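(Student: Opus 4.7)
The plan is to argue by contradiction: assume $\overline{\Gamma}_G$ has an isolated vertex $a$ and a vertex $x$ of degree $1$. Because $a$ is isolated in $\overline{\Gamma}_G$, it is a universal vertex of $\Gamma_G$, so Lemma \ref{56} forces $O(a)=2$. Applying Lemma \ref{53} to $x$ gives $O(x)\in\{3,4\}$. Since $a$ is universal in $\Gamma_G$, it is adjacent to $x$ in $\Gamma_G$, so $\langle a,x\rangle$ is not cyclic. The whole proof then consists of contradicting this in each of the two possible values of $O(x)$, using that $G$ is Abelian.

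If $O(x)=3$, then $a$ and $x$ are commuting elements of coprime orders $2$ and $3$. The standard fact that commuting elements of coprime orders generate a cyclic subgroup of order equal to the product gives $\langle a,x\rangle=\langle ax\rangle$, which is cyclic, contradicting the preceding paragraph. This case is immediate.

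If $O(x)=4$, Lemma \ref{53} additionally supplies $x^{2}\in Cyc(G)$. The key small observation is that $a\neq x^{2}$, because $a\notin Cyc(G)$ (as a vertex of $\Gamma_G$) while $x^{2}\in Cyc(G)$. Thus $a$ and $x^{2}$ are two distinct commuting involutions, and the Abelian subgroup they generate is isomorphic to $\mathbb{Z}_{2}\oplus\mathbb{Z}_{2}$, hence not cyclic. On the other hand, the defining property of $x^{2}\in Cyc(G)$ forces $\langle a,x^{2}\rangle$ to be cyclic. This contradiction closes the second case and finishes the proof.

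The main obstacle, and the one place where the argument could go wrong, is the order-$4$ case. The naive attempt to replicate the order-$3$ case fails here, because $\langle a,x\rangle\cong\mathbb{Z}_{2}\oplus\mathbb{Z}_{4}$ really is non-cyclic and so offers no direct contradiction; one has to pivot to $x^{2}$ rather than $x$ and then exploit the asymmetry that $x^{2}\in Cyc(G)$ whereas $a\notin Cyc(G)$. Beyond noticing this pivot, the argument is a routine consequence of Lemmas \ref{56} and \ref{53} together with the commutativity of $G$.
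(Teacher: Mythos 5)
Your proof is correct and follows essentially the same route as the paper: apply Lemma \ref{56} to get $O(a)=2$, Lemma \ref{53} to get $O(x)\in\{3,4\}$, dispose of the order-$3$ case via the cyclic group of order $6$ generated by commuting elements of coprime order, and of the order-$4$ case via $x^2\in Cyc(G)$ being a second involution. Your explicit remark that $a\neq x^2$ (since $a\notin Cyc(G)$ while $x^2\in Cyc(G)$) is a small detail the paper leaves implicit, but the argument is the same.
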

\begin{proof} Let $x$ and $y$ be an isolated vertex and a vertex of degree one in $\overline{\Gamma}_G$, respectively. By Lemmas \ref{56} and \ref{53}, $O(x)=2$ and $O(y)=3$ or $4$. If $O(y)=3$, then since $G$ is an Abelian group, $\langle x,y \rangle\cong \Bbb{Z}_6$. Hence $x$ is adjacent to $y$ in $\overline{\Gamma}_G$, which contradicts the fact that $x$ is an isolated vertex in $\overline{\Gamma}_G$. If $O(y)=4$, then by Lemma \ref{53}, $y^2\in Cyc(G)$. Thus $\langle y^2, x\rangle$ is cyclic. This is contradiction.
\end{proof}
\begin{theorem}\label{59} Let $G$ be a non-locally cyclic group of order $n$ and $|Cyc(G)|=1$. Then $\overline{\Gamma}_G\cong K_3\cup (n-4)K_1$ if and only if $n=8$ and $G\cong D_8$.
\end{theorem}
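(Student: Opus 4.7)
My plan for the reverse direction is a direct verification: for $D_8=\langle r,s\mid r^4=s^2=1,\,srs=r^{-1}\rangle$ I would check that $|Cyc(D_8)|=1$, that the rotations $\{r,r^2,r^3\}$ all lie in the cyclic subgroup $\langle r\rangle$ and so are pairwise non-adjacent in $\Gamma_{D_8}$, and that each reflection $r^is$ is adjacent in $\Gamma_{D_8}$ to every other non-identity element. Together these give $\overline{\Gamma}_{D_8}\cong K_3\cup 4K_1$, matching $n=8$.

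For the forward direction, let $T$ denote the three vertices of the $K_3$ component and $I$ the $n-4$ isolated vertices of $\overline{\Gamma}_G$. Each $v\in I$ is universal in $\Gamma_G$, so $O(v)=2$ by Lemma~\ref{56}. Inside $T$, a non-involution $v$ forces $v^{-1}\in T$ (since $\langle v,v^{-1}\rangle$ is cyclic, hence $v^{-1}$ is non-adjacent to $v$ in $\Gamma_G$), while Lemma~\ref{63} forbids two involutions of $T$ from being $\overline{\Gamma}_G$-adjacent. So $T=\{x,y,y^{-1}\}$ with $O(x)=2$ and $O(y)\geq 3$. A degree count shows $\deg_{\Gamma_G}(y)=n-4$, hence $|Cyc_G(y)|=4$; combined with $\langle y\rangle\subseteq Cyc_G(y)$ this bounds $O(y)\leq 4$.

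I would then rule out $O(y)=3$ by observing that $\langle x,y\rangle$ is cyclic of order $6$, so its generator $xy$ lies in $Cyc_G(y)\setminus\{1,y,y^{-1},x\}$, contradicting $|Cyc_G(y)|=4$. Hence $O(y)=4$, and $\langle y\rangle\subseteq Cyc_G(y)=\{1,y,y^{-1},x\}$ forces $x=y^2$. Thus $T=\langle y\rangle\setminus\{1\}$ and $I=G\setminus\langle y\rangle$ is a set of $n-4$ involutions, each universal in $\Gamma_G$; in particular every order-$4$ element of $G$ lies in $\langle y\rangle$.

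For the final and main step, fix $t\in I$ and analyze $\langle y,t\rangle$. Since $tyt^{-1}$ has order $4$ and the only order-$4$ elements of $G$ are $y$ and $y^{-1}$, we get $tyt\in\{y,y^{-1}\}$; the case $tyt=y$ is excluded because then $yt$ would be an order-$4$ element outside $\langle y\rangle$. Hence $tyt=y^{-1}$ for every $t\in I$, which forces $C_G(y)=\langle y\rangle$. A direct computation then gives $t_1t_2\in C_G(y)=\langle y\rangle$ for any $t_1,t_2\in I$, so $I\subseteq t_1\langle y\rangle$ and $|I|\leq 4$, i.e., $n\leq 8$. Theorem~\ref{62} rules out $n=4$ (an edgeless graph on three vertices is disconnected), so $n=8$, and the relations $y^4=t_1^2=1$, $t_1\,y\,t_1=y^{-1}$ identify $G\cong D_8$. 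I expect the main obstacle to be eliminating the abelian possibility $\langle y,t\rangle\cong\mathbb{Z}_4\times\mathbb{Z}_2$; the trick there is that it would produce order-$4$ elements outside $\langle y\rangle$, which contradicts the earlier structural analysis.
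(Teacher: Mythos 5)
Your proof is correct and follows essentially the same route as the paper: identify the $K_3$ as $\{y,y^2,y^3\}$ with $O(y)=4$, show every isolated vertex $t$ satisfies $tyt=y^{-1}$, and conclude $G=\langle y,t_1\rangle\cong D_8$ from $t_1t_2\in C_G(y)=\langle y\rangle$. Your degree count giving $|Cyc_G(y)|=4$ (and hence both $O(y)\le 4$ and the exclusion of $O(y)=3$ via the order-$6$ generator $xy$) is a cleaner justification than the paper's unexplained assertion that every $g\in G$ has $O(g)\le 4$ and its case split on whether $3\mid n$.
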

\begin{proof} If $n=8$ and $G\cong D_8$, then $\overline{\Gamma}_G\cong K_3\cup 4K_1$. For proof of converse, let $\overline{\Gamma}_G\cong K_3\cup (n-4)K_1$. Also let $x_1, x_2, \ldots, x_{n-4}$ be isolated vertices in $\overline{\Gamma}_G$ and $a,b,c\in V(\overline{\Gamma}_G)$ such that induced subgraph on $a, b$ and $c$ be $K_3$. By Lemma \ref{56}, $O(x_i)=2$. It is easy to see that for each $g\in G$, $O(g)\leq 4$ and $ab=ba$, $ac=ca$ and $bc=cb$. We have the following two cases.
\newline
{\bf Case i.} Let $3\mid n$ and $O(a)=3$. Without loss of generality, suppose $b=a^{-1}$ and $O(c)=2$. So $O(ac)=6$ which contradicts the fact that for each $g\in G$, $O(g)\leq 4$.
\newline
{\bf Case ii.} Let $3\nmid n$. By Theorem \ref{60}, there exist $v\in \{a, b, c\}$ such that $O(v)=4$. Without loss of generality, suppose $O(a)=4$, $b=a^{-1}$ and $c=a^{2}$. Since $O(x_iax_i)=O(a)$, $x_iax_i=a$ or $x_iax_i=a^{-1}$. If $x_iax_i=a$, then $x_ia=ax_i$ and $(x_ia)^2=e$. So $e=x_i^2a^2=a^2$, which is a contradiction. Thus $x_iax_i=a^{-1}$ and so $\langle a, x_i \rangle\cong D_8$.
\newline
 We claim that $G\cong \langle a,x_1 \rangle=\{e, a, a^2, a^3, x_1, x_1a, x_1a^2, x_1a^3\}$. For proof of our claim, let $x_2\in G\setminus \langle a,x_1\rangle$.
Since $x_2a\notin \{e, a, a^2, a^3\}$, $O(x_2a)=2$. Thus $x_2ax_2=a^{-1}$. On the other hand, $x_1ax_1=a^{-1}$. So $x_1ax_1=x_2ax_2$. Consequently, $(x_1x_2)a=a(x_1x_2)$ and so $x_1x_2\in C_G(a)$. Since $C_G(a)=\langle a \rangle$, $x_2\in \{x_1, x_1a, x_1a^2, x_1a^3\}$. Hence $x_2\in \langle a,x_1 \rangle$, which is false. Therefore $G\cong \langle a,x_1 \rangle$, as claimed.
\end{proof}
\begin{theorem}\label{66} Let $G$ be a non-locally cyclic group of order $n$ and $|Cyc(G)|=1$. Then $\overline{\Gamma}_G\cong K_4\cup (n-5)K_1$ if and only if $n=10$ and $G\cong D_{10}$.
\end{theorem}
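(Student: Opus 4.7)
The plan mirrors the proof of Theorem \ref{59}. For the forward direction I would verify directly that $\overline{\Gamma}_{D_{10}}\cong K_4\cup 5K_1$: in $D_{10}=\langle r,s\mid r^5=s^2=e,\ srs=r^{-1}\rangle$ we have $Cyc(G)=\{e\}$, the rotations $r,r^2,r^3,r^4$ pairwise generate the cyclic subgroup $\langle r\rangle$, while every reflection together with any other non-identity element generates a non-cyclic subgroup.

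For the converse, I assume $\overline{\Gamma}_G\cong K_4\cup(n-5)K_1$ and label the $K_4$-vertices $a,b,c,d$ and the isolated vertices $x_1,\ldots,x_{n-5}$. Each $x_i$ is a universal vertex of $\Gamma_G$, so Lemma \ref{56} forces $O(x_i)=2$. For any $g\in G$ with $O(g)=m\ge 2$, the $m-1$ non-identity powers of $g$ pairwise generate cyclic subgroups and hence form a clique of size $m-1$ in $\overline{\Gamma}_G$; since the maximum clique size in $\overline{\Gamma}_G$ is $4$, every $g\in G$ satisfies $O(g)\le 5$.

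Next I would pin down the orders of $a,b,c,d$. By Lemma \ref{63} any two order-$2$ elements are adjacent in $\Gamma_G$, so at most one of $a,b,c,d$ has order $2$; up to relabelling, $O(a)\ge 3$. If $O(a)=3$, the edge $\{a,a^2\}$ of $\overline{\Gamma}_G$ forces $a^2\in\{b,c,d\}$, and for any other $K_4$-vertex $v$ the subgroup $\langle a,v\rangle$ is cyclic of order a multiple of $3$ and at most $5$, hence $v\in\langle a\rangle=\{e,a,a^2\}$, a contradiction. If $O(a)=4$, then $\{a,a^2,a^3\}$ is a triangle of $\overline{\Gamma}_G$ sitting inside the $K_4$, and the remaining $K_4$-vertex would likewise have to lie in $\langle a\rangle$, which is impossible. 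Therefore $O(a)=5$, and by uniqueness of the $K_4$ in $\overline{\Gamma}_G$ we get $\{a,b,c,d\}=\{a,a^2,a^3,a^4\}$. Set $h=a$ and $H=\langle h\rangle$.

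Finally, for each $x_i$ the subgroup $\langle h,x_i\rangle$ is non-cyclic, so $x_i$ does not centralize $h$; combined with $x_i^2=e$ this forces $x_ihx_i=h^k$ with $k^2\equiv 1\pmod 5$ and $k\ne 1$, hence $x_ihx_i=h^{-1}$ and $\langle h,x_i\rangle\cong D_{10}$. Any order-$2$ element commuting with $h$ would generate with $h$ a cyclic group of order $10$, contradicting $O(g)\le 5$, and all order-$5$ elements of $G$ lie in $H$ (being the only $K_4$-clique), so $C_G(h)=H$. Consequently, if $x_j\notin\langle h,x_1\rangle$, then $x_1x_j$ centralizes $h$, so $x_1x_j\in H$, giving $x_j\in x_1H\subset\langle h,x_1\rangle$, a contradiction. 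Therefore $G=\langle h,x_1\rangle\cong D_{10}$ and $n=10$. The main obstacle is ruling out orders $3$ and $4$ for the $K_4$-vertices; everything else is a short conjugation and centralizer computation parallel to the end of Theorem \ref{59}.
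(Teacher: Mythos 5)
Your proof is correct and follows essentially the same route as the paper: involutions at the isolated vertices, the bound $O(g)\le 5$, identification of the $K_4$ with the non-identity powers of an element of order $5$, and the conjugation/centralizer argument showing $G=\langle a,x_1\rangle\cong D_{10}$. The only differences are organizational: you justify $O(g)\le 5$ by a clique count and exclude orders $3$ and $4$ on the $K_4$ via the uniform observation that a cyclic subgroup $\langle a,v\rangle$ would have order at most $5$, where the paper instead argues through $O(uv)=6$ and $\Bbb{Z}_3\oplus\Bbb{Z}_3$ --- if anything, your version makes explicit two steps the paper leaves as ``easy to see.''
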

\begin{proof} If $n=10$ and $G\cong D_{10}$, then $\overline{\Gamma}_G\cong K_4\cup 5K_1$. For proof of converse, let $\overline{\Gamma}_G\cong K_4\cup (n-5)K_1$ and $x_1, x_2, \ldots, x_{n-5}$ be isolated vertices in $\overline{\Gamma}_G$ and $a,b,c,d\in V(\overline{\Gamma}_G)$ such that induced subgraph on $a, b,c$ and $d$ be $K_4$. By Lemma \ref{56}, $O(x_i)=2$. It is easy to see that for each $g\in G$, $O(g)\leq 5$.
\newline
By Theorem \ref{60}, there exist $t\in \{a,b,c,d\}$ such that $O(t)\neq 2$. Furthermore, by Lemma \ref{63}, for every $x\in \{a, b, c, d\}$, $O(x)\neq 4$. We consider the following cases.
\newline
{\bf Case i.} Let $u,v\in \{a,b,c,d\}$ such that $O(u)=2$ and $O(v)=3$. Then since $uv=vu$, $O(uv)=6$ which contradicts the fact that for each $g\in G$, $O(g)\leq 5$.
\newline
{\bf Case ii.} Let for each $u\in \{a, b, c, d\}$, $O(u)=3$. Without loss of generality, let $c=a^{-1}$ and $d=b^{-1}$. Then $\langle a,b \rangle\cong \Bbb{Z}_3\oplus \Bbb{Z}_3$. Since $a$ is adjacent to $b$ in $\overline{\Gamma}_G$, $\langle a,b \rangle$ is a cyclic subgroup of $G$, which is false.
\newline
{\bf Case iii.} If $O(a)=5$, then $b=a^2$, $c=a^3$ and $d=a^4$. Since $O(x_iax_i)=O(a)$, $x_iax_i\in \{a, a^2, a^3, a^4\}$. If $x_iax_i\in \{a, a^2, a^3\}$, then $(ax_i)^2\in \{a^2, a^3, a^4\}$. On the other hand, $O(ax_i)=2$ and so $e\in \{a^2, a^3, a^4\}$ which is false. Hence $ax_i=x_ia^4$ and so $\langle a, x_i \rangle\cong D_{10}$.
\newline
By the similar argument as in proof of Theorem \ref{59}, it can be proven that $G\cong \langle a,x_1 \rangle$. Therefore $G$ is isomorphic to $D_{10}$.
\end{proof}

\begin{theorem} \label{52}
Let $G$ be a non-locally cyclic group. Then $\Gamma_G\cong C_n$ if and only if $n=3$ and $G\cong \Bbb{Z}_2\oplus \Bbb{Z}_2$.
\end{theorem}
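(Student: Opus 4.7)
The plan is to verify the forward direction directly and then rule out all cycle lengths $n$ except $3$.

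For the forward direction, if $G\cong\Bbb{Z}_2\oplus\Bbb{Z}_2$ then for every non-identity $y$ some $x$ satisfies $\langle x,y\rangle=G$, which is non-cyclic, so $Cyc(G)=\{e\}$ and $V(\Gamma_G)$ consists of three involutions, pairwise adjacent by Lemma \ref{63}. Hence $\Gamma_G\cong K_3=C_3$.

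For the converse, suppose $\Gamma_G\cong C_n$. If $n=3$ then $\Gamma_G=K_3$ has diameter $1$, so by Theorem \ref{60} the group $G$ is elementary abelian of exponent $2$. In any such group $Cyc(G)=\{e\}$, so $|G|=|V(\Gamma_G)|+1=4$, giving $G\cong\Bbb{Z}_2\oplus\Bbb{Z}_2$.

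It remains to exclude $n\geq 4$. The key tool is Observation \ref{6558}: if $O(x)\neq 2$, then $x^{-1}\neq x$ is another vertex with $N_{\Gamma_G}(x)=N_{\Gamma_G}(x^{-1})$. In $C_n$, the sets $N(v_i)=\{v_{i-1},v_{i+1}\}$ are pairwise distinct unless $n=4$ (where opposite vertices share neighborhoods). So for $n\geq 5$ every vertex of $\Gamma_G$ has order $2$; but then Lemma \ref{63} forces $\Gamma_G$ to be complete, contradicting $\Gamma_G=C_n$. This reduces everything to $n=4$.

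The main obstacle is the case $n=4$. Label $C_4=v_1v_2v_3v_4v_1$; opposite vertices of $C_4$ are the only pairs with equal neighborhoods, so any $v_i$ with $O(v_i)\neq 2$ must equal the inverse of its opposite. A short case check then rules out every configuration with an involution: two involutions would have to be adjacent by Lemma \ref{63}, yet any involution would be forced onto a vertex whose opposite is either a distinct involution (non-adjacent in $C_4$) or an element of order $\geq 3$ whose inverse cannot be an involution. Hence all four $v_i$ have order $\geq 3$ with $v_3=v_1^{-1}$ and $v_4=v_2^{-1}$. Now look at $v_1v_2\in G$: it is not $e$ (else $v_2=v_3$) and a brief check excludes $v_1v_2\in\{v_1,\ldots,v_4\}$, since each such equality collapses $\langle v_1,v_2\rangle$ into $\langle v_1\rangle$ or $\langle v_2\rangle$, contradicting adjacency of $v_1$ and $v_2$. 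Thus $v_1v_2$ is a nontrivial element of $Cyc(G)$, so $|Cyc(G)|\geq 2$. Since $\overline{C_4}=2K_2$ has minimum degree $1$, Lemma \ref{51} forbids $|Cyc(G)|\geq 3$, leaving $|Cyc(G)|=2$ and $|G|=6$. The only non-locally-cyclic group of order $6$ is $S_3$, but direct computation gives $|Cyc(S_3)|=1$, a contradiction that finishes the proof.
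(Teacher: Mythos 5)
Your proof is correct. The forward direction and the elimination of $n\geq 5$ (distinct vertices of $C_n$ have distinct neighbourhoods, so Observation \ref{6558} forces every vertex to be an involution, and Lemma \ref{63} then forces completeness) coincide with the paper's argument, and your treatment of $n=3$ via Theorem \ref{60} tidies up a step the paper leaves implicit. Where you genuinely diverge is the case $n=4$. The paper first notes that every vertex of $C_n$ has degree $2$, hence $|Cyc_G(x)|=|G|-2$ and $|C_G(x)|\geq |G|-2$, which forces $C_G(x)=G$ and makes $G$ abelian; combined with Lemma \ref{92} (so $5\leq |G|\leq 7$ when $n=4$), an abelian group of such order is cyclic, a contradiction. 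It also invokes the diameter bound $n\leq 7$ from Theorem \ref{62}, which your neighbourhood argument renders unnecessary. You instead stay entirely inside the $C_4$ configuration: Observation \ref{6558} pins each non-involution's inverse to the opposite vertex, Lemma \ref{63} rules out involutions altogether, the product $v_1v_2$ is forced into $Cyc(G)\setminus\{e\}$, and Lemma \ref{51} applied to $\overline{C_4}=2K_2$ caps $|Cyc(G)|$ at $2$, leaving only $S_3$ of order $6$ to check (and $|Cyc(S_3)|=1$). Both routes are sound: the paper's is shorter once the cheap but powerful "degree $2$ implies nearly full centralizer" remark is made, while yours is more elementary and self-contained, avoiding both the centralizer estimate and the diameter bound, at the cost of a longer case analysis.
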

\begin{proof}
If $G\cong \Bbb{Z}_2\oplus \Bbb{Z}_2$, then $\Gamma_G$ is isomorphic to $C_3$. In the following we consider the proof of converse.
\newline
It is clear that for every $x\in G\setminus \{e\}$, $|C_G(x)|\geq |G|-2$. Then $C_G(x)=G$ and so $G$ is an abelian group. On the other hand, by Theorem \ref{62}, $n\leq 7$. If $n=4$ and $\Gamma_G$ is isomorphic to $C_4$, then by Lemma \ref{92}, $4\leq |G|<8$. Since $G$ is an abelian group, $G$ is cyclic, which is not true.
\newline
Suppose that $5\leq n\leq 7$. By Lemma \ref{63}, suppose that $x\in V(\Gamma_G)$ such that $O(x)\neq 2$. Then $x^{-1}\notin N_{\Gamma_G}(x)$. By Observation \ref{6558}, $N_{\Gamma_G}(x)=N_{\Gamma_G}(x^{-1})$, which is a contradiction.
\end{proof}

\begin{lemma}\label{1037}
  Let $G$ be a non-locally cyclic group and $x,y\in V(\Gamma_G)$ such that $x$ and $y$ are adjacent in $\overline{\Gamma}_G$. Then $|N_{\overline{\Gamma}_G}(x)\cap N_{\overline{\Gamma}_G}(y)|\geq 1$. Furthermore, $\overline{\Gamma}_G$ does not have $C_i$ as a component, where $i\geq 4$.
\end{lemma}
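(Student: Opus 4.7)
The plan is to produce an explicit common neighbor $w$ of $x$ and $y$ in $\overline{\Gamma}_G$, drawing candidates from the cyclic subgroup $H := \langle x,y\rangle$, which is cyclic precisely because $x$ and $y$ are adjacent in $\overline{\Gamma}_G$.

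\textbf{First claim.} By Observation \ref{6558}, both $x^{-1}$ and $y^{-1}$ belong to $V(\Gamma_G)$; they also lie in $H$, so $\langle x,y^{-1}\rangle$ and $\langle x^{-1},y\rangle$ are cyclic. I would split on $O(x)$ and $O(y)$. If $O(x)\neq 2$ and $x^{-1}\neq y$, then $w:=x^{-1}$ is a genuine third vertex adjacent in $\overline{\Gamma}_G$ to both $x$ and $y$; symmetrically $w:=y^{-1}$ works if $O(y)\neq 2$ and $y^{-1}\neq x$. Lemma \ref{63} rules out $O(x)=O(y)=2$ (which would force $x,y$ adjacent in $\Gamma_G$), while the remaining possibility collapses to $y=x^{-1}$ with $O(x)\geq 3$. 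In that last case, the plan is to take $w:=x^2$: then $x^2\neq x$, and if $O(x)\geq 4$ also $x^2\neq x^{-1}$; the subgroups $\langle x,x^2\rangle=\langle y,x^2\rangle=\langle x\rangle$ are cyclic, so once one checks $x^2\notin Cyc(G)$ (bootstrapping from $x\notin Cyc(G)$), the vertex $w=x^2$ is a common neighbor.

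\textbf{Furthermore part.} Suppose toward contradiction that $\overline{\Gamma}_G$ has a connected component $C$ isomorphic to $C_i$ for some $i\geq 4$. Pick any edge $\{x,y\}$ of $C$. By the first claim, $x$ and $y$ share a common neighbor $w$ in $\overline{\Gamma}_G$. Since $C$ is an entire connected component of $\overline{\Gamma}_G$ and $w$ is adjacent to both endpoints (which lie in $C$), necessarily $w\in V(C)$. But in $C_i$ with $i\geq 4$, adjacent vertices never share a common neighbor (they are only linked by the single edge of the cycle between them). This contradiction finishes the proof.

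The step I expect to be hardest is the residual sub-case $O(x)=3$ with $y=x^{-1}$, where $\langle x\rangle=\{e,x,x^{-1}\}$ provides no third vertex inside $H$. One then has to locate a common neighbor outside $\langle x\rangle$, i.e.\ some $w\in Cyc_G(x)\setminus\bigl(Cyc(G)\cup\{x,x^{-1}\}\bigr)$; handling this cleanly requires extracting structural information about $Cyc_G(x)$ from the hypothesis that $x$ is a vertex (not an isolated one), whereas the outer case analysis and the cycle-component argument are essentially formal consequences of the first claim.
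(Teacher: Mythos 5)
You are right to distrust the easy argument: the only natural common neighbor is $x^{-1}$ (or $y^{-1}$), and this fails exactly when $y=x^{-1}$. The paper's own proof simply takes $x^{-1}$ as the common neighbor without checking $x^{-1}\neq y$, so it has the very gap you isolated. But the residual case you flag as ``hardest'' cannot be closed, because the first assertion of the lemma is false there. Take $G=S_3$, $x=(123)$, $y=(132)=x^{-1}$: then $Cyc(G)=\{e\}$, $Cyc_G(x)=\{e,x,x^{-1}\}$ (a transposition together with a $3$-cycle generates all of $S_3$), so $x$ and $y$ are adjacent in $\overline{\Gamma}_{S_3}$ and $N_{\overline{\Gamma}_G}(x)\cap N_{\overline{\Gamma}_G}(y)=\emptyset$; indeed $\overline{\Gamma}_{S_3}\cong P_2\cup 3K_1$, and the paper itself repeatedly exhibits graphs $\overline{\Gamma}_G$ that are disjoint unions of copies of $P_2$, each edge of which violates the claim. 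Your proposed patch for the other residual case ($y=x^{-1}$, $O(x)\geq 4$, take $w=x^2$) also fails: $x\notin Cyc(G)$ does not imply $x^2\notin Cyc(G)$. In $Q_8$ one has $Cyc(Q_8)=\{1,-1\}$, $x=i$, $x^2=-1\in Cyc(Q_8)$, and again $i$ and $-i$ are adjacent in $\overline{\Gamma}_{Q_8}$ with no common neighbor.

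What survives, and is all the paper ever uses, is the ``furthermore'' part, but your derivation of it from the first claim inherits the gap, since the edge of the cycle you pick may be exactly of the form $\{x,x^{-1}\}$. It can be repaired by using the degree-$2$ structure of a cycle component: let $C\cong C_i$, $i\geq 4$, be a component and $\{x,y\}$ an edge of $C$ with, by Lemma \ref{63}, $O(y)\neq 2$. If $y^{-1}\neq x$ then $y^{-1}$ is a common neighbor of $x$ and $y$ (as $\langle x,y\rangle$ cyclic gives $\langle x,y^{-1}\rangle$ cyclic), producing a triangle in $C$ --- impossible for $i\geq 4$. If $y^{-1}=x$, let $z$ be the second neighbor of $x$ on the cycle; then $\langle x,z\rangle$ cyclic forces $\langle y,z\rangle=\langle x^{-1},z\rangle$ cyclic, so $z$ is adjacent to $y$, giving a chord between the two neighbors of $x$ --- again impossible for $i\geq 4$. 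So the correct move is to weaken or delete the first sentence of the lemma and prove the component statement directly along these lines, rather than to keep hunting for a common neighbor that, in general, does not exist.
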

\begin{proof}
  By Lemma \ref{63}, at least $O(x)\neq 2$ or $O(y)\neq 2$. Without loss of generality, let $O(x)\neq 2$. Since $\langle x, y\rangle$ is cyclic group, $\langle x^{-1}, y\rangle$ is cyclic, too. Hence $x^{-1}\in N_{\overline{\Gamma}_G}(x)\cap N_{\overline{\Gamma}_G}(y)$ and so $|N_{\overline{\Gamma}_G}(x)\cap N_{\overline{\Gamma}_G}(y)|\geq 1$. Now it is clear that $\overline{\Gamma}_G$ does not have $C_i$ as a component.
\end{proof}

\section{On acceptable groups}
\vskip 0.4 true cm
In this section, we consider Algebraic structures of non-cyclic graph on acceptable groups and show if $G$ is an acceptable group, then $\overline{\Gamma}_G$ is constructed by both copies of $P_2$ and isolated vertices.
\begin{definition}
  Let $G$ be a group. Write $S$ and $T$ for the set of elements of $G$ of order two and three, respectively. Then $G$ is {\it acceptable} when neither $S$ nor $T$ is empty and $G=S^*\cup T$, where $S^*=S\cup \{e\}$.
\end{definition}
\begin{lemma}\cite{1112223}\label{6556}
  If $G$ is acceptable, then either $S^*\leq G$ or $T^*\leq G$ ($"\leq "$ means "is a subgroup of").
\end{lemma}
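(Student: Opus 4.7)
The plan is to reformulate the statement in Sylow-theoretic language and then argue by induction on $|G|$. Since every non-identity element of $G$ has order $2$ or $3$, Cauchy's theorem forces $|G|=2^{a}3^{b}$ with $a,b\ge 1$, and Burnside's $p^{a}q^{b}$ theorem makes $G$ solvable. Moreover, every $2$-element of $G$ automatically lies in $S^{*}$ and every $3$-element lies in $T^{*}$. Hence if $S^{*}$ is a subgroup it has exponent $2$ and contains a whole Sylow $2$-subgroup, which forces $S^{*}$ itself to be a normal Sylow $2$-subgroup; dually for $T^{*}$. Conversely, a normal Sylow $2$-subgroup is necessarily equal to $S^{*}$ (uniqueness of Sylow absorbs all order-$2$ elements). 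Thus the lemma is equivalent to the reduced claim: \emph{$G$ has a normal Sylow $2$-subgroup or a normal Sylow $3$-subgroup}.

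I would prove this reduced claim by induction on $|G|$. Let $N$ be a minimal normal subgroup of $G$; by solvability, $N$ is elementary abelian of prime exponent $p\in\{2,3\}$. Say $p=2$ (the other case is symmetric). If $N$ already contains every order-$2$ element of $G$, then $N=S^{*}$ is a normal Sylow $2$-subgroup and we are done. Otherwise $G/N$ has elements of both orders $2$ and $3$ (order-$3$ elements survive because $N$ is a $2$-group), so $G/N$ is acceptable of strictly smaller order, and the induction hypothesis supplies a normal Sylow subgroup of $G/N$. If that normal Sylow is the Sylow $2$-subgroup, its preimage in $G$ is a normal $2$-subgroup of order $2^{a}$, hence a normal Sylow $2$-subgroup of $G$.

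The delicate case is when $N$ is a $2$-group and $G/N$ possesses a normal Sylow $3$-subgroup. Its preimage $H$ is normal in $G$ with $|H|=|N|\cdot 3^{b}$, and Schur--Zassenhaus writes $H=N\rtimes Q$ for a Sylow $3$-subgroup $Q$ of $H$; I would then try to promote $Q$ to a normal subgroup of $G$ by showing that $Q$ is the \emph{unique} Sylow $3$-subgroup of $H$. To force this, I would use acceptability directly: any non-identity fixed point $v\in N$ of some non-identity $q\in Q$ would give $vq$ of order $\mathrm{lcm}(2,3)=6$, contradicting that $G$ contains no element of order $6$. A fixed-point-free action of the $3$-group $Q$ on $N\setminus\{e\}$, combined with a Frobenius-complement style argument, then yields that $Q$ is normal in $H$ and therefore normal in $G$.

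The main obstacle I anticipate is precisely this last step, ensuring uniqueness of $Q$ inside $H$. If the fixed-point-free/Frobenius argument does not close cleanly, I would fall back to a direct analysis of witnesses to the failure of $S^{*}$ being a subgroup: a pair $a,b\in S$ with $ab\in T$ produces a copy $\langle a,b\rangle\cong S_{3}$ via the relations $a^{2}=b^{2}=(ab)^{3}=e$ and $aba=bab$. Interacting this $S_{3}$ with any hypothetical pair $c,d\in T$ whose product lies in $S$ should then manufacture an element of order $6$ among products such as $ac$, $bd$, or $a\cdot(cd)$, contradicting acceptability and forcing $T^{*}$ to be closed under multiplication.
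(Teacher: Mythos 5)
The paper does not actually prove this lemma: it is quoted verbatim from Bolker's Monthly article (reference \cite{1112223}), so there is no internal proof to compare your attempt against, and the attempt has to stand on its own. It does not. Your reduction to ``$G$ has a normal Sylow $2$-subgroup or a normal Sylow $3$-subgroup'' and the induction setup are sound (for finite $G$; note that neither the paper's definition nor Bolker's theorem assumes finiteness, so the Sylow/Burnside machinery already restricts the scope). The fatal step is your ``delicate case.'' There $H=N\rtimes Q$ with $N\neq 1$ elementary abelian of exponent $2$ and $Q\neq 1$ a $3$-group, and you correctly observe that acceptability forces every non-identity $q\in Q$ to act on $N$ without non-trivial fixed points. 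But that observation proves the exact \emph{opposite} of what you need: fixed-point-freeness gives $C_N(Q)=1$, and any $n\in N$ normalizing $Q$ satisfies $[n,q]\in N\cap Q=1$, so $N_H(Q)=Q$ is self-normalizing and $Q$ has $|N|>1$ conjugates in $H$. A Frobenius complement over a non-trivial kernel is never normal, so $Q$ cannot be ``promoted'' and the induction does not close. The correct move in that configuration is to aim at $S^*$ instead: since $N\trianglelefteq G$, for every involution $s$ the product $N\langle s\rangle$ is a $2$-subgroup of an acceptable group, hence of exponent $2$ and elementary abelian, so $s\in C_G(N)$; and $C_G(N)$ contains no element of order $3$ (it would commute with an involution of $N$ and produce an element of order $6$), so $C_G(N)$ is a normal elementary abelian $2$-subgroup containing all of $S$, i.e.\ $C_G(N)=S^*\leq G$. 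Some argument of this kind is indispensable, and it is absent from your proposal.

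Two further gaps. First, the two prime cases are not ``symmetric'' as claimed: a $2$-subgroup of an acceptable group is automatically abelian (exponent $2$), while a $3$-subgroup of exponent $3$ need not be, so the case where the minimal normal subgroup is a $3$-group requires its own treatment (e.g.\ that a fixed-point-free automorphism of order $2$ inverts, hence abelianizes, the $3$-group). Second, the fallback plan is not a proof: ``should then manufacture an element of order $6$ among products such as $ac$, $bd$, or $a(cd)$'' merely restates the goal. For an involution $a$ and an order-$3$ element $c$, the product $ac$ legitimately has order $2$ or $3$, and excluding a globally consistent assignment of these orders is precisely the non-trivial case analysis that constitutes Bolker's actual proof.
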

\begin{theorem} \label{1112} Let $G$ be a non-locally cyclic group. If $G$ is an acceptable group, then $\overline{\Gamma}_G$ is isomorphic to some copies of $P_2$ together with isolated vertices.
\end{theorem}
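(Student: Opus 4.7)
The plan is to analyze the neighborhood of each vertex of $\overline{\Gamma}_G$ according to whether its order is $2$ or $3$, and to conclude that every non-isolated component must be a single edge joining an element of order $3$ to its inverse.

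As a preliminary step I would verify that $Cyc(G)=\{e\}$ in any acceptable group, so that $V(\Gamma_G)=G\setminus\{e\}$. Indeed, if some $x\in Cyc(G)$ had order $2$, then choosing any $y\in T$ (non-empty by hypothesis) would make $\langle x,y\rangle$ cyclic of order divisible by both $2$ and $3$, producing an element of order $6$ and contradicting acceptability; a symmetric argument excludes order-$3$ elements from $Cyc(G)$.

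Now fix an edge $xy$ of $\overline{\Gamma}_G$, so $\langle x,y\rangle$ is cyclic. Every element of this cyclic group has order $1$, $2$, or $3$, so $|\langle x,y\rangle|\in\{2,3\}$ (the value $6$ is forbidden). I would then split on $O(x)$. If $O(x)=2$, then $|\langle x,y\rangle|=2$ forces $y\in\{e,x\}$, which is impossible for $y\in V(\Gamma_G)\setminus\{x\}$; hence every order-$2$ vertex is isolated in $\overline{\Gamma}_G$. If $O(x)=3$, then $|\langle x,y\rangle|=3$, so $\langle x,y\rangle=\langle x\rangle$ and $y\in\{x,x^{-1}\}$, leaving $y=x^{-1}$ as the only possibility; conversely $x$ and $x^{-1}$ are always adjacent in $\overline{\Gamma}_G$ since $\langle x,x^{-1}\rangle=\langle x\rangle$ is cyclic, so each order-$3$ vertex has degree exactly one (its partner $x^{-1}$ lying in $V(\Gamma_G)$ by the preliminary step together with Observation \ref{6558}).

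Combining the two cases, $\overline{\Gamma}_G$ is the disjoint union of the isolated vertices indexed by $S$ and the edges $\{x,x^{-1}\}$ indexed by unordered inverse pairs of elements of $T$, which is exactly some copies of $P_2$ together with isolated vertices, as required. I do not anticipate any serious obstacle; the whole argument is forced by the restriction on element orders, and the symmetry $x\leftrightarrow x^{-1}$ is already recorded in Observation \ref{6558}. It is worth noting that Lemma \ref{6556} does not seem to be needed for this particular statement, though it may well be invoked elsewhere to refine the count of $P_2$ components in terms of the subgroup structure of $S^*$ or $T^*$.
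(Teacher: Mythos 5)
Your proposal is correct and follows essentially the same route as the paper: both arguments show that an order-$2$ element generates a cyclic subgroup with no nonidentity element other than itself (so it is isolated in $\overline{\Gamma}_G$) and that an order-$3$ element is adjacent only to its inverse, which amounts to the paper's computation $Cyc_G(u)=\{e,u\}$ for $u\in S$ and $Cyc_G(v)=\{e,v,v^{-1}\}$ for $v\in T$, together with $|Cyc(G)|=1$. Your observation that Lemma \ref{6556} is not needed here is also consistent with the paper, which does not invoke it in this proof.
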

\begin{proof}
  Let $G$ be an acceptable group. Write $S( or ~T)$ for the set of elements of $G$ of order two(or three). Also let $x, y\in S$ and $z, t\in T$, where $z\neq t^{-1}$. If $\langle x, z\rangle$ is a cyclic subgroup of $G$, then $O(xz)=6$ which contradicts the fact that $G$ is an acceptable group. Also it is clear that $\langle x,y\rangle$ and $\langle z, t\rangle$ are not cyclic. Hence for each $u\in S$ and $v\in T$, $Cyc_G(u)=\{e,u\}$ and $Cyc_G(v)=\{e, v, v^{-1}\}$. Therefore $|Cyc(G)|=1$ and $\overline{\Gamma}_G$ is isomorphic to some copies of $P_2$ together with isolated vertices.
\end{proof}

\section{\bf On domination number of non-cyclic graph}
\vskip 0.4 true cm
In this section, domination number on some non-cyclic graphs is considered.
\begin{theorem} \cite{113} \label{39} Let $\Gamma$ be a graph with no isolated vertex. Then $\gamma(\Gamma)\leq \frac{n}{2}$.
\end{theorem}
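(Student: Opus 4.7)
The plan is to invoke the classical swap argument of Ore. Let $D\subseteq V(\Gamma)$ be a minimum dominating set, so $|D|=\gamma(\Gamma)$. The central claim I would prove is that the complementary set $V(\Gamma)\setminus D$ is also a dominating set of $\Gamma$; once this is in hand, the trivial inequality $n-|D|=|V(\Gamma)\setminus D|\geq \gamma(\Gamma)=|D|$ rearranges immediately into $\gamma(\Gamma)\leq n/2$.

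To establish the claim, the only non-trivial point is that each vertex $v\in D$ must have a neighbor in $V(\Gamma)\setminus D$ (the vertices of $V(\Gamma)\setminus D$ dominate themselves trivially once $V(\Gamma)\setminus D$ is the set under consideration). I would argue this by contradiction: suppose some $v\in D$ has all of its neighbors inside $D$. Because $\Gamma$ has no isolated vertex, $v$ has at least one neighbor, and every such neighbor lies in $D\setminus\{v\}$; in particular, $v$ itself is adjacent to a vertex of $D\setminus\{v\}$. Moreover, since $v$ has no neighbor outside $D$, the vertex $v$ was not the unique dominator of any vertex of $V(\Gamma)\setminus D$, so removing $v$ from $D$ leaves every vertex of $V(\Gamma)\setminus D$ still dominated. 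Therefore $D\setminus\{v\}$ is itself a dominating set, of size $|D|-1$, contradicting the minimality of $D$.

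The main (and only) subtlety is the book-keeping in this removal step: one has to verify simultaneously that every vertex of $V(\Gamma)\setminus D$ still has a dominator in $D\setminus\{v\}$ \emph{and} that $v$ itself is dominated by $D\setminus\{v\}$. Both facts rest on the no-isolated-vertex hypothesis — that is exactly where the assumption is used — and no further structural property of $\Gamma$ is needed.
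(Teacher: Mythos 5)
Your argument is correct: it is the classical proof that for a minimum (indeed, any inclusion-minimal) dominating set $D$ of a graph without isolated vertices, the complement $V(\Gamma)\setminus D$ is again dominating, whence $\gamma(\Gamma)\leq n-\gamma(\Gamma)$. The paper itself gives no proof of this statement --- it is quoted from Ore --- and your write-up is exactly the standard argument, with the removal step (that $D\setminus\{v\}$ would still dominate both $V(\Gamma)\setminus D$ and $v$ itself) handled correctly.
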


\begin{theorem}\cite{125}\label{c4} Let $\Gamma$ be a graph without isolated vertices on $n$ vertices such that $n$ is even. Then $\gamma(\Gamma)= \frac{n}{2}$ if and only if the components of $\Gamma$ are $C_4$ or $H\circ K_1$ where $H$ is a connected graph.
\end{theorem}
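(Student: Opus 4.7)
The plan is to dispatch the easy ``if'' direction by direct verification, then reduce the ``only if'' direction to a single component, and finally carry out a structural analysis there.

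For the ``if'' direction I would check each family separately. One has $\gamma(C_4)=2=|V(C_4)|/2$ by inspection. For $H\circ K_1$, every pendant has a unique neighbour, so a dominating set must contain, for each $v\in V(H)$, either $v$ itself or the pendant attached to $v$; this forces $\gamma(H\circ K_1)\geq |V(H)|$, while $V(H)$ is itself a dominating set, giving equality. Summing over components yields $\gamma(\Gamma)=n/2$.

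For the ``only if'' direction I would first use additivity of $\gamma$ over components together with Theorem~\ref{39}, which gives $\gamma(C_i)\leq n_i/2$ for each component $C_i$ of order $n_i$ (none isolated). The equality $\sum\gamma(C_i)=\sum n_i/2$ forces $\gamma(C_i)=n_i/2$ for every $i$, and in particular each $n_i$ is even, so it suffices to classify connected graphs $C$ of even order with $\gamma(C)=|V(C)|/2$. Fix such a connected $C$, let $D$ be a minimum dominating set, and set $B=V(C)\setminus D$, so $|D|=|B|=n/2$. The key step is to produce, via Hall's theorem combined with the minimality of $D$, a perfect matching $M$ between $D$ and $B$ in the bipartite graph of $D$--$B$ edges; any defect in Hall's condition would let one exchange vertices and shrink $D$. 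Each matched pair then behaves like a ``private'' edge: the $B$-endpoint is the only $B$-neighbour privately dominated by its partner in $D$. From here one argues that either $D$ is independent and every matched pair is a pendant edge, exhibiting $C$ as $H\circ K_1$ with $H=C[D]$, or else a short alternating cycle is forced, which the minimality of $D$ collapses to $C_4$.

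The main obstacle is this last connected classification: turning the matching $M$ into a genuine pendant structure requires repeated use of the minimality of $D$ to eliminate stray edges between and within matched pairs, and a careful case analysis to rule out exceptional configurations other than $C_4$. This is precisely the content of the Payan--Xuong theorem cited as \cite{125}, so in a finished proof I would either reproduce their short argument or, since the result is being invoked here as a black box, simply cite it.
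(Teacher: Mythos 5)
The paper gives no proof of this statement: it is quoted verbatim from \cite{125} (Fink, Jacobson, Kinch and Roberts), so there is no internal argument to compare yours against. Your verification of the ``if'' direction and the reduction to connected components are both correct, and since you ultimately fall back on citing \cite{125} for the hard connected classification (where your Hall's-theorem ``exchange'' step is only gestured at and would need something like the Bollob\'as--Cockayne external-private-neighbour argument to become rigorous), your treatment is in effect the same as the paper's, namely an appeal to the cited source.
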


\begin{theorem}\cite{124}\label{Gbar} Let $\Gamma$ be a graph on $n$ vertices. Then
\begin{itemize}
\item[i)] $\gamma(\Gamma)+\gamma(\overline{\Gamma}) \leq n+1$.
\item[ii)]$\gamma(\Gamma)\gamma(\overline{\Gamma}) \leq n$.
\end{itemize}
\end{theorem}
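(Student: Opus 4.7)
The plan is to prove (i) and (ii) separately; in each case I would split on whether one of $\gamma(\Gamma)$ or $\gamma(\overline{\Gamma})$ is already equal to $1$, i.e.\ whether $\Gamma$ or $\overline{\Gamma}$ has a universal vertex. Recall that a vertex is universal in $\overline{\Gamma}$ exactly when it is isolated in $\Gamma$.

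For (i), I would proceed by a three-case analysis. If $\Gamma$ has an isolated vertex, that vertex is universal in $\overline{\Gamma}$, so $\gamma(\overline{\Gamma})=1$; combined with the trivial bound $\gamma(\Gamma)\le n$ this gives $\gamma(\Gamma)+\gamma(\overline{\Gamma})\le n+1$. The case where $\overline{\Gamma}$ has an isolated vertex is symmetric. Otherwise neither graph has an isolated vertex, so Theorem \ref{39} applies to both, yielding $\gamma(\Gamma)\le n/2$ and $\gamma(\overline{\Gamma})\le n/2$, so the sum is at most $n$.

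For (ii), the case $\gamma(\Gamma)=1$ gives the bound immediately since $\gamma(\overline{\Gamma})\le n$. Assume $k:=\gamma(\Gamma)\ge 2$ and fix a minimum dominating set $D=\{d_1,\ldots,d_k\}$ of $\Gamma$. The key step is to partition the vertex set as $V(\Gamma)=V_1\cup\cdots\cup V_k$ (disjoint union) with $d_i\in V_i\subseteq N_\Gamma[d_i]$ for every $i$: this is possible because $D$ dominates $\Gamma$, so each $v\notin D$ has some neighbor $d_i\in D$ and can be placed in $V_i$. Pigeonhole then produces an index $i_0$ with $|V_{i_0}|\le n/k$. I would then check that $V_{i_0}$ is a dominating set of $\overline{\Gamma}$: from $V_{i_0}\subseteq N_\Gamma[d_{i_0}]$ the vertex $d_{i_0}$ is $\overline{\Gamma}$-adjacent to every vertex outside $V_{i_0}$, while each vertex of $V_{i_0}$ dominates itself. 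Hence $\gamma(\overline{\Gamma})\le|V_{i_0}|\le n/k$, and multiplying by $k=\gamma(\Gamma)$ yields (ii).

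The main obstacle is the partitioning step in (ii): the blocks must be arranged so that each block is contained in the closed neighborhood of one of its own vertices, because only then does the block automatically dominate $\overline{\Gamma}$. The requirement $d_i\in V_i\subseteq N_\Gamma[d_i]$ accomplishes exactly this and reduces the product inequality to pigeonhole on the smallest block; once the partition is in place, everything else is bookkeeping.
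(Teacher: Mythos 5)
The paper quotes this theorem from Jaeger--Payan without proof, so there is no in-paper argument to compare against; your attempt has to stand on its own. Part (i) does: the three-way split (isolated vertex in $\Gamma$, in $\overline{\Gamma}$, or in neither, the last case handled by Ore's bound, Theorem \ref{39}, applied to both graphs) is correct and complete.

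Part (ii) has a genuine gap at the step ``from $V_{i_0}\subseteq N_\Gamma[d_{i_0}]$ the vertex $d_{i_0}$ is $\overline{\Gamma}$-adjacent to every vertex outside $V_{i_0}$.'' That would require the \emph{reverse} inclusion $N_\Gamma[d_{i_0}]\subseteq V_{i_0}$, which your partition does not provide: a vertex with several neighbours in $D$ is placed in only one block, so it can lie outside $V_{i_0}$ while still being a $\Gamma$-neighbour of $d_{i_0}$, hence a $\overline{\Gamma}$-non-neighbour. Concretely, take $V(\Gamma)=\{d_1,d_2,a,u\}$ with $E(\Gamma)=\{d_1a,\,d_1u,\,d_2u\}$; then $\gamma(\Gamma)=2$ with $D=\{d_1,d_2\}$, and the admissible partition $V_1=\{d_1,a,u\}$, $V_2=\{d_2\}$ gives $|V_2|=1\le n/k$, yet $\{d_2\}$ does not dominate $\overline{\Gamma}$ (which is the path $d_1d_2,\,d_2a,\,au$), since $u$ is a $\Gamma$-neighbour of $d_2$. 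So the smallest block need not dominate $\overline{\Gamma}$, and the pigeonhole conclusion $\gamma(\overline{\Gamma})\le n/k$ is unsupported. The sound version of your idea is to take a single full closed neighbourhood: $N_\Gamma[v]$ always dominates $\overline{\Gamma}$, giving $\gamma(\overline{\Gamma})\le\delta(\Gamma)+1$; but the sets $N_\Gamma[d_i]$ overlap, so pigeonhole no longer bounds the smallest one by $n/k$, and the known proofs of (ii) instead proceed by a case analysis on small values of $\gamma(\Gamma)$ (e.g.\ $\gamma(\Gamma)=2$ forces $\overline{\Gamma}$ to have no isolated vertex, whence $\gamma(\overline{\Gamma})\le n/2$ by Ore). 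As written, part (ii) is not proved.
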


\begin{lemma}\label{19}\cite{123} Let $\Gamma$ be a graph on $n$ vertices. Then $\gamma(\Gamma)=1$ if and only if $\Delta(\Gamma)=n-1$.
\end{lemma}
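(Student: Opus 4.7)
The plan is to verify the biconditional directly from the definitions of domination number and maximum degree, treating the two directions separately and appealing only to the definition that a vertex $v$ dominates every other vertex exactly when $v$ is adjacent to each of them.

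First I would handle the forward implication. Assume $\gamma(\Gamma)=1$ and let $D=\{v\}$ be a dominating set of cardinality one. By the definition of a dominating set, every vertex of $V(\Gamma)\setminus\{v\}$ must be adjacent to $v$, so $v$ has at least $n-1$ neighbours. Since no vertex in a simple graph on $n$ vertices can have degree exceeding $n-1$, this forces $\deg_{\Gamma}(v)=n-1$ and hence $\Delta(\Gamma)=n-1$.

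For the reverse implication, suppose $\Delta(\Gamma)=n-1$ and pick a vertex $v\in V(\Gamma)$ realising this maximum, i.e.\ $\deg_{\Gamma}(v)=n-1$. Then $v$ is adjacent to every other vertex of $\Gamma$, so $D=\{v\}$ is a dominating set and $\gamma(\Gamma)\leq 1$. Since any graph on at least one vertex has $\gamma(\Gamma)\geq 1$, equality holds.

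There is essentially no obstacle here: the whole argument is a direct unwinding of the two definitions, and no auxiliary structural fact about $\Gamma$ is required. The only implicit convention I would make explicit is that $\Gamma$ is assumed to have at least one vertex (so that $\gamma(\Gamma)\geq 1$ is meaningful), which is already built into the hypothesis ``graph on $n$ vertices''.
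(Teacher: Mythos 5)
Your argument is correct and complete: both directions follow immediately from the definitions of a dominating set and of degree, exactly as you describe. The paper itself gives no proof of this lemma (it is cited from Haynes--Hedetniemi--Slater), so your direct definitional verification is precisely the standard argument one would supply.
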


 \begin{theorem}\label{70} Let $G$ be a non-locally cyclic group of order $n$, where $n$ and $|Cyc(G)|=t$ are odd. Then $\gamma(\overline{\Gamma}_G)=\frac{n-t}{2}$ if and only if $t=1$ and $\overline{\Gamma}_G$ is isomorphic to $\frac{n-1}{2}$ copies of $P_2$.
 \end{theorem}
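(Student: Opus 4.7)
My plan is as follows. The reverse implication is a direct count: if $t=1$ and $\overline{\Gamma}_G$ is a disjoint union of $(n-1)/2$ copies of $P_2$, then each copy contributes exactly one vertex to any dominating set, giving $\gamma(\overline{\Gamma}_G)=(n-1)/2=(n-t)/2$.

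For the forward direction, set $m=n-t$; since $n$ and $t$ are both odd, $m$ is even. I first claim $\overline{\Gamma}_G$ has no isolated vertex. Because $|G|=n$ is odd, Lagrange's theorem forbids elements of order $2$ in $G$; hence Lemma \ref{56} rules out universal vertices in $\Gamma_G$, equivalently isolated vertices in $\overline{\Gamma}_G$. With $m$ even, no isolated vertex, and $\gamma(\overline{\Gamma}_G)=m/2$, Theorem \ref{c4} identifies every component of $\overline{\Gamma}_G$ as either $C_4$ or a corona $H\circ K_1$ with $H$ connected, and Lemma \ref{1037} excludes the cycle case.

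The crux of the plan is then to rule out corona components $H\circ K_1$ with $|V(H)|\ge 2$. Suppose such a component exists, with $v\in V(H)$ and $u$ its pendant. In $\overline{\Gamma}_G$ we have $N(u)=\{v\}$, so $u$ and $v$ are adjacent with no common neighbour. However, the proof of Lemma \ref{1037} exhibits $u^{-1}$ as a common neighbour whenever $O(u)\neq 2$, which holds here since $n$ is odd. The only way the common-neighbour count can fail is $u^{-1}=v$, i.e.\ $u=v^{-1}$. Now Observation \ref{6558} gives $N_{\Gamma_G}(v)=N_{\Gamma_G}(v^{-1})=N_{\Gamma_G}(u)$, so $v$ and $u$ have equal degrees in $\overline{\Gamma}_G$, forcing $\deg_{\overline{\Gamma}_G}(v)=1$. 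This contradicts the fact that in $H\circ K_1$ with $|V(H)|\ge 2$, the vertex $v$ has both a pendant neighbour and at least one neighbour in $H$. Therefore every component is $K_2=P_2$.

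Finally, since every vertex lies in a $P_2$-component, $\delta(\overline{\Gamma}_G)=1$, and the furthermore clause of Lemma \ref{51} gives $|Cyc(G)|<3$; since $t$ is odd, $t=1$, so $\overline{\Gamma}_G$ is a disjoint union of $(n-1)/2$ copies of $P_2$. The main obstacle is the corona-reduction step, where the forced equality $u=v^{-1}$ extracted from Lemma \ref{1037} must be combined with the twin property of inverses from Observation \ref{6558} to collapse $H\circ K_1$ into a single edge.
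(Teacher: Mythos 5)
Your proof is correct and follows essentially the same route as the paper: both rely on Theorem \ref{c4} to decompose $\overline{\Gamma}_G$ into $C_4$'s and coronas, on Lemma \ref{1037} (really the inverse-element argument behind it) to exclude cycles, and on the fact that a degree-one vertex forces its neighbour to be its inverse and hence also of degree one (the content of Lemma \ref{53}, which you re-derive inline via Observation \ref{6558}) to collapse coronas to $P_2$. The only organizational difference is that the paper disposes of $t>1$ at the outset using the $\delta(\overline{\Gamma}_G)\ge 2$ clause of Lemma \ref{51}, whereas you deduce $t=1$ at the end from $\delta(\overline{\Gamma}_G)=1$ via the contrapositive of the same clause; both are valid.
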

 \begin{proof}
 Let $\gamma(\overline{\Gamma}_G)=\frac{n-t}{2}$. If $t>1$, then since $t$ is odd, $t>2$. By Lemma \ref{51}, $\delta(\overline{\Gamma}_G)\geq 2$. Also by Theorem \ref{c4}, the components of $\overline{\Gamma}_G$ are $C_4$ or $H\circ K_1$ where $H$ is a connected graph. Hence the components of $\overline{\Gamma}_G$ are $C_4$ which contradicts Lemma \ref{1037}. So $t=1$.
 \newline
 Now let $\gamma(\overline{\Gamma}_G)=\frac{n-1}{2}$. Since $n$ is odd, by Lemma \ref{56}, $\overline{\Gamma}_G$ does not have isolated vertex. By Theorem \ref{c4}, the components of $\overline{\Gamma}_G$ are isomorphic to $C_4$ or $H\circ K_1$. By Lemma \ref{1037}, $\overline{\Gamma}_G$ does not have $C_4$ as a component and so the components of $\overline{\Gamma}_G$ are isomorphic to $H\circ K_1$. By Lemma \ref{53}, $\overline{\Gamma}_G$ is isomorphic to $\frac{n-1}{2}$ copies of $P_2$. The proof of converse is easy.
 \end{proof}
\begin{theorem}\label{55}
Let $G$ be a non-locally cyclic group of order $n$, $|Cyc(G)|=1$. Then $\gamma(\Gamma_G)< \frac{n-1}{2}$.
\end{theorem}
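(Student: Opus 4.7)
The plan is to combine the degree lower bound $\delta(\Gamma_G)\ge 2$ (Lemma~\ref{58}) with the tight characterization of graphs attaining the half-order bound for the domination number (Theorem~\ref{c4}). Since $|Cyc(G)|=1$, we have $|V(\Gamma_G)|=n-1$, so the target inequality is $\gamma(\Gamma_G)<\frac{n-1}{2}$.

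First I would observe that Lemma~\ref{58} gives $\delta(\Gamma_G)\ge 2$, in particular $\Gamma_G$ has no isolated vertex. Theorem~\ref{39} then yields the weak bound $\gamma(\Gamma_G)\le\frac{n-1}{2}$, so we only need to rule out equality. If $n$ is even, $\frac{n-1}{2}$ is not an integer, so $\gamma(\Gamma_G)\le \lfloor\frac{n-1}{2}\rfloor<\frac{n-1}{2}$ and we are done. Thus the substantive case is $n$ odd, where $n-1$ is even and $\frac{n-1}{2}$ is a genuine integer upper bound.

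Assume for contradiction that $n$ is odd and $\gamma(\Gamma_G)=\frac{n-1}{2}$. Since $\Gamma_G$ has no isolated vertex and $n-1$ is even, Theorem~\ref{c4} forces every component of $\Gamma_G$ to be isomorphic to $C_4$ or to a corona $H\circ K_1$. Now invoke Theorem~\ref{62}: $\Gamma_G$ is connected, so $\Gamma_G$ itself is either $C_4$ or a single corona $H\circ K_1$. A corona $H\circ K_1$ has pendant vertices of degree $1$, which contradicts $\delta(\Gamma_G)\ge 2$ from Lemma~\ref{58}. On the other hand, $\Gamma_G\cong C_4$ would mean $|V(\Gamma_G)|=4$, i.e.\ $n=5$; but a group of order $5$ is cyclic, hence locally cyclic, violating the hypothesis (alternatively, Theorem~\ref{52} directly says the only cycle realized as some $\Gamma_G$ is $C_3$ from $\mathbb{Z}_2\oplus\mathbb{Z}_2$). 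Both cases fail, completing the contradiction.

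I do not expect a genuine obstacle here: the work has been done by the earlier lemmas, and the argument is essentially a clean dispatch via Theorem~\ref{c4}. The only mild subtlety is the parity split and remembering to cite Theorem~\ref{52} (or the order-$5$ cyclicity argument) to eliminate the $C_4$ component case; once that is in place, $\delta(\Gamma_G)\ge 2$ kills the corona case immediately.
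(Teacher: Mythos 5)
Your proposal is correct and follows essentially the same route as the paper: bound $\gamma(\Gamma_G)\le\frac{n-1}{2}$ via the no-isolated-vertex/connectedness argument, then use Theorem~\ref{c4} to force components of type $C_4$ or $H\circ K_1$, kill the corona case with $\delta(\Gamma_G)\ge 2$ from Lemma~\ref{58}, and kill the $C_4$ case via Theorem~\ref{52} (your direct order-$5$ cyclicity observation is an equally valid substitute). The explicit parity split you add is harmless and only makes explicit what the paper leaves implicit.
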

\begin{proof}
Since $\Gamma_G$ is a connected graph, $\gamma(\Gamma_G)\leq \frac{n-1}{2}$. If $\gamma(\Gamma_G)=\frac{n-1}{2}$, then $n$ is odd. By Theorem \ref{c4}, the components of $\Gamma_G$ are isomorphic to $C_4$ or $H\circ K_1$ where $H$ is a connected graph. By Lemma \ref{58}, components of $\Gamma_G$ are of type $C_4$. Since $\Gamma_G$ is a connected graph, $\Gamma_G\cong C_4$. By Theorem \ref{52}, $\Gamma_G$ does not have $C_4$ as a component, which is a contradiction. Therefore $\gamma(\Gamma_G)<\frac{n-1}{2}$.
\end{proof}
\begin{theorem}\label{71} Let $G$ be a non-locally cyclic group of order $n$ and $|Cyc(G)|=1$. Then
\begin{itemize}
\item[i)]$\gamma(\Gamma_G)+\gamma(\overline{\Gamma}_G)=n$ if and only if $G$ is an elementary Abelian $2$-group.
\item[ii)]$\gamma(\Gamma_G)+\gamma(\overline{\Gamma}_G)=n-1$ if and only if $G\cong S_3$.
\item[iii)]$\gamma(\Gamma_G)+\gamma(\overline{\Gamma}_G)=n-2$ if and only if $G\cong D_8$.
\item[iv)]$\gamma(\Gamma_G)+\gamma(\overline{\Gamma}_G)=n-3$ if and only if either $G\cong D_{10}$ or $G\cong \Bbb{Z}_3\oplus \Bbb{Z}_3$.
\end{itemize}
\end{theorem}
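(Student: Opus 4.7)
Plan for Theorem \ref{71}:

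The "if" direction of each part is a direct computation of $\gamma(\Gamma_G)$ and $\gamma(\overline{\Gamma}_G)$ from the known shape of $\overline{\Gamma}_G$. For $G = \mathbb{Z}_2^k$, Theorem \ref{60} gives $\Gamma_G = K_{n-1}$, so $\gamma(\Gamma_G) = 1$ and $\gamma(\overline{\Gamma}_G) = n - 1$. For $G = S_3$, $\overline{\Gamma}_G = K_2 \cup 3K_1$, giving $(1, 4)$. For $G \in \{D_8, D_{10}\}$, invoke Theorems \ref{59} and \ref{66}, giving $(1, 5)$ and $(1, 6)$. For $G = \mathbb{Z}_3 \oplus \mathbb{Z}_3$, the four cyclic subgroups of order $3$ give $\overline{\Gamma}_G = 4K_2$, so $(\gamma(\Gamma_G), \gamma(\overline{\Gamma}_G)) = (2, 4)$.

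For the converses, set $a = \gamma(\Gamma_G)$, $b = \gamma(\overline{\Gamma}_G)$. Two inequalities drive everything: $ab \le n - 1$ (Theorem \ref{Gbar}(ii)) and $b \ge 2$ (because $\Gamma_G$ is connected on at least two vertices, so $\overline{\Gamma}_G$ has no universal vertex, which by Lemma \ref{19} rules out $b = 1$). Combined with $a + b = n - i$ for $i \in \{0, 1, 2, 3\}$, these force $(a-1)(b-1) \le i + 1$, leaving a very short list of candidate pairs. In parts (i)--(iii) only $(a, b) = (1, n - 1 - i)$ survives. In part (iv) one additionally gets $(a, b) = (2, 4)$, which forces $n = 9$; the unique non-cyclic group of that order is $\mathbb{Z}_3 \oplus \mathbb{Z}_3$.

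When $a = 1$, decompose $\overline{\Gamma}_G = I \sqcup R$, where $I$ is the set of isolated vertices of $\overline{\Gamma}_G$ (each of order $2$ by Lemma \ref{56}). Then $\gamma(R) = |R| - i$, and since $R$ has no isolated vertex Theorem \ref{39} gives $|R| \le 2i$. Lemma \ref{1037} forbids $C_k$ components with $k \ge 4$; Lemma \ref{53} forces degree-$1$ vertices to occur only as endpoints of a $K_2$ component, and, under $|Cyc(G)| = 1$, such a $K_2$ must be a pair $\{x, x^{-1}\}$ with $O(x) = 3$; Theorem \ref{c4} handles the extremal case $\gamma(R) = |R|/2$ arising at $i = 3$. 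Enumerating yields, for $i = 0, 1, 2, 3$, that $R$ is respectively empty; $K_2$; $K_3$ or $2K_2$; and one of $K_4$, $K_4 - e$, $K_{2,3}$, the house graph, $K_3 \cup K_2$, $3K_2$, or a corona $H \circ K_1$ with $|H| \ge 2$.

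It remains to match the admissible patterns to groups and rule out the rest. The patterns $\varnothing$, $K_3 \cup (n-4)K_1$, and $K_4 \cup (n-5)K_1$ give (i), (iii), (iv) via Theorems \ref{60}, \ref{59}, \ref{66}. The $K_2$ pattern in (ii) yields $G \cong S_3$ after a short argument: every involution inverts $x$, so $C_G(x) = \langle x \rangle$, the involutions form a single coset of $\langle x \rangle$, and $n = 6$. The $2K_2$ and $3K_2$ patterns force exactly $2$ and $3$ cyclic subgroups of order $3$ respectively, violating $n_3 \equiv 1 \pmod 3$ (larger Sylow $3$-subgroups being ruled out by element counts). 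The $K_4 - e$, $K_{2,3}$, and house patterns collapse under Observation \ref{6558}: matching $N_{\overline{\Gamma}_G}(x)$ with $N_{\overline{\Gamma}_G}(x^{-1})$, combined with Lemma \ref{63} for involutions, forces two distinct vertices of $R$ to coincide. The $K_3 \cup K_2$ pattern forces $\mathbb{Z}_4, \mathbb{Z}_3 \le G$ with $C_G(u) = \langle u \rangle$ and $[G : C_G(u)] \le 2$, giving $n \le 8$ against $12 \mid n$. The corona patterns $H \circ K_1$ with $|H| \ge 2$ are excluded by Lemma \ref{53} via their pendants. I expect the main obstacle to be the neighborhood-matching arguments in part (iv): the $|R| \in \{5, 6\}$ cases require juggling Observation \ref{6558}, Lemma \ref{63}, and elementary group theory in concert; once those patterns are excluded, the classification follows immediately.
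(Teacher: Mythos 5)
Your proposal is correct in substance but organizes the argument differently from the paper, and in a few places more cleanly. The paper reaches $\gamma(\Gamma_G)=1$ by combining Theorem \ref{55} with Theorem \ref{39} (an isolated vertex of $\overline{\Gamma}_G$ is a universal vertex of $\Gamma_G$), splitting part (iv) into the cases $\gamma(\Gamma_G)=\frac{n-3}{2}$, $\frac{n-5}{2}$, and smaller, and using Theorem \ref{70} to dispatch the first two; you instead use the Nordhaus--Gaddum product bound of Theorem \ref{Gbar}(ii), which gives $(a-1)(b-1)\le i$ (note: $ab-(a+b)+1\le (n-1)-(n-i)+1=i$, not $i+1$ as you wrote --- your weaker bound still works but admits a few more candidate pairs). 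Either way one must still dispose of the residual pairs with $a\ge 2$, which force $n\le 9$ or $n\le 10$ and reduce to a finite check you should write out; the paper's Case i/Case ii arguments in parts (iii) and (iv) do exactly this. Once $a=1$, the paper classifies $\overline{\Gamma}_G$ by its maximum degree (subcases A, B, C of part (iv)), whereas you classify the non-isolated part $R$ by $|R|\le 2i$ and isomorphism type; the two case analyses cover the same ground. Several of your eliminations are genuinely shorter than the paper's: Sylow counting ($n_3\equiv 1\pmod 3$) kills the $2K_2$ and $3K_2$ patterns, where the paper invokes Bolker's acceptable-group theorem and a hand check at $n\le 12$; and the conjugacy-class count for $K_3\cup K_2$ replaces the paper's computation with $Z(H)$ for a Sylow $2$-subgroup. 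Your neighborhood-matching eliminations of $K_4-e$, $K_{2,3}$ and the house via Observation \ref{6558} and Lemma \ref{63} are sound (the paper's subcases C.1 and C.3 are the analogous steps). The one point you must be careful with is the completeness of the $|R|=5$ enumeration: there are further $5$-vertex graphs with $\delta\ge 2$ and domination number $2$ (e.g.\ $C_5$ with two chords) not on your list, though each is excluded by the same tools (an edge whose endpoints have no common neighbour must be a pair $\{x,x^{-1}\}$ with $O(x)=3$, and Observation \ref{6558} then forces equal degrees); this needs to be stated as a lemma rather than left implicit.
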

\begin{proof}
{\bf i)} Let $\gamma(\Gamma_G)+\gamma(\overline{\Gamma}_G)=n$. By Theorem \ref{55}, $\gamma(\Gamma_G)<\frac{n-1}{2}$. So $\gamma(\overline{\Gamma}_G)>\frac{n-1}{2}$ and so $\overline{\Gamma}_G$ has at least one isolated vertex. Thus $\gamma(\Gamma_G)=1$ and $\gamma(\overline{\Gamma}_G)=n-1$. Hence $\Gamma_G$ is isomorphic to a complete graph. By Theorem \ref{60}, $G$ is an elementary Abelian $2$-group. Conversely, let $G$ be an elementary Abelian $2$-group. By Theorem \ref{60}, $\Gamma_G$ is a complete graph. So $\gamma(\Gamma_G)=1$ and $\gamma(\overline{\Gamma}_G)=n-1$. Thus $\gamma(\Gamma_G)+\gamma(\overline{\Gamma}_G)=n$.
\newline
{\bf ii)} Let $\gamma(\Gamma_G)+\gamma(\overline{\Gamma}_G)=n-1$. By Theorem \ref{55}, $\gamma(\Gamma_G)<\frac{n-1}{2}$. So $\gamma(\overline{\Gamma}_G)>\frac{n-1}{2}$. Hence $\overline{\Gamma}_G$ has at least one isolated vertex. Thus $\gamma(\Gamma_G)=1$ and so $\gamma(\overline{\Gamma}_G)=n-2$. Hence $\overline{\Gamma}_G$ is a union of one copy of $P_2$ and isolated vertices denoted by $\{x_1, x_2, \ldots, x_{n-3}\}$. By Lemma \ref{53}, $G$ has exactly two elements of order $3$ that are denoted by $a$ and $b$ such that $a=b^{-1}$. Obviously, $O(x_iax_i)=O(a)$ and so $x_iax_i=a$ or $a^{-1}$. If $x_iax_i=a$, then $x_ia=ax_i$ and $(x_ia)^2=e$. On the other hand, $(x_ia)^2=x_i^2a^2=a^2$. So $a^2=e$, which is a contradiction. Hence $x_iax_i=a^{-1}$ and so $\langle a,x_i \rangle\cong S_3$. By the similar argument as in proof of Theorem \ref{59}, it can be proven that $G=\langle a,x_1 \rangle\cong S_3$. Conversely, $\gamma(\Gamma_{S_3})=1$ and $\gamma(\overline{\Gamma}_{S_3})=4$. So the result is achieved.
\newline
{\bf iii)} Let $\gamma(\Gamma_G)+\gamma(\overline{\Gamma}_G)=n-2$. By Theorem \ref{55}, $\gamma(\Gamma_G)<\frac{n-1}{2}$. The following two cases will be considered.
\newline
{\bf Case i.} If $\gamma(\Gamma_G)=\frac{n-3}{2}$, then $\gamma(\overline{\Gamma}_G)=\frac{n-1}{2}$. By Theorem \ref{70}, $\overline{\Gamma}_G$ is $\frac{n-1}{2}$ copies of $P_2$ and so $\Gamma_G$ is a complete multipartite graph. Hence $\gamma(\Gamma_G)=2$. It follows that $n=7$, which is impossible.
\newline
{\bf Case ii.} If $\gamma(\Gamma_G)<\frac{n-3}{2}$, then $\gamma(\overline{\Gamma}_G)>\frac{n-1}{2}$ and so $\overline{\Gamma}_G$ has at least one isolated vertex. Thus $\gamma(\Gamma_G)=1$ and $\gamma(\overline{\Gamma}_G)=n-3$. Let $V(\overline{\Gamma}_G)=A\cup B$ in that $A=\{a,b,c,d\}$ and $B=\{x_1\ldots x_{n-5}\}$. We have induced subgraph on $B$ is isomorphic to $\overline{K}_{n-5}$. By Lemmas \ref{53} and \ref{1037} and $\gamma(\overline{\Gamma}_G)=n-3$, induced subgraph on $A$ is isomorphic with two copies of $P_2$ or a union of $K_3$ and an isolated vertex.
\newline
Suppose that induced subgraph on $A$ is isomorphic with two copies of $P_2$. By Lemma \ref{56}, $B$ is a set of elements of order two. Also, by proof of Lemma \ref{53}, $A$ is a set of elements of order $3$.

 Hence $G$ is an acceptable group and so by Lemma \ref{6556}, $A\cup \{e\}\leq G$ or $B\cup \{e\}\leq G$. If $A\cup \{e\}\leq G$, then since $|A\cup \{e\}|=5$, it follows that $3\mid 5$ which is not true. Thus $B\cup \{e\}\leq G$. Since $|B\cup \{e\}|\leq |G|/2$, we have $n\leq 8$. But there is no group of order less than or equal to $8$ such that has four elements of order $3$, which is a contradiction.
\newline
Hence $\overline{\Gamma}_G$ is isomorphic to a union of isolated vertices denoted by $x_1,\ldots, x_{n-4}$
 and three  vertices $\{a,b,c\}$ such that induced subgraph on $\{a, b, c\}$ is isomorphic to $K_3$. By Theorem \ref{59}, $G\cong D_8$. Conversely, $\gamma(\Gamma_{D_8})=1$ and $\gamma(\overline{\Gamma}_{D_8})=5$.
\newline
{\bf iv)} Let $\gamma(\Gamma_G)+\gamma(\overline{\Gamma}_G)=n-3$. By Theorem \ref{55}, $\gamma(\Gamma_G)<\frac{n-1}{2}$. The following three cases will be considered.
\newline
{\bf Case i.} If $\gamma(\Gamma_G)=\frac{n-3}{2}$, then $\gamma(\overline{\Gamma}_G)=\frac{n-3}{2}$. By Theorem \ref{Gbar}, $\gamma(\Gamma_G)\gamma(\overline{\Gamma}_G)\leq n-1$. Hence $(\frac{n-3}{2})^2\leq n-1$ and so $n^2-10n+13\leq 0$ implies that $2\leq n\leq 8$. Since $\gamma(\Gamma_G)=\frac{n-3}{2}$, $n\in \{5,7\}$, which contradicts the fact that $G$ is not a cyclic group.
\newline
{\bf Case ii.} If $\gamma(\Gamma_G)=\frac{n-5}{2}$, then $\gamma(\overline{\Gamma}_G)=\frac{n-1}{2}$. By Theorem \ref{70}, $\overline{\Gamma}_G$ is $\frac{n-1}{2}$ copies of $P_2$ and so $\Gamma_G$ is complete multipartite graph. Hence $\gamma(\Gamma_G)=2$. It follows that $n=9$ and so $G\cong \Bbb{Z}_3\oplus \Bbb{Z}_3$.
\newline
{\bf Case iii.} Let $\gamma(\Gamma_G)<\frac{n-5}{2}$. Then $\gamma(\overline{\Gamma}_G)>\frac{n-1}{2}$ and so $\overline{\Gamma}_G$ has at least one isolated vertex. Thus $\gamma(\Gamma_G)=1$ and $\gamma(\overline{\Gamma}_G)=n-4$.
 \newline
Assume that $u\in V(\overline{\Gamma}_G)$. If $deg(u)>3$, then $\gamma(\overline{\Gamma}_G)< n-4$, which is a contradiction. Hence for each $u\in V(\overline{\Gamma}_G)$, $deg_{\overline{\Gamma}_G}(u)\leq 3$ and so $O(u)\leq 5$. We consider the following subcases.
\newline
{\bf A.} For each $u\in V(\overline{\Gamma}_G)$, $deg_{\overline{\Gamma}_G}(u)\leq 1$.
\newline
In this case, $\overline{\Gamma}_G$ is isomorphic to a union of three copies of $P_2$ and $n-7$ isolated vertices. By Lemmas \ref{56} and \ref{53}, $G$ is a group which is constructed from set of elements of order three and set of elements of order two that are defined by $T$ and $S$, respectively. Hence $G$ is an acceptable group and so by Lemma \ref{6556}, $T\cup \{e\}\leq G$ or $S\cup \{e\}\leq G$. Since $3\nmid |T\cup \{e\}|=7$, $T\cup \{e\}\nleq G$. Hence $S\cup \{e\}\leq G$. Since $|S\cup \{e\}|\leq |G|/2$, we have $n\leq 12$.
\newline
Since there is no group with $n\leq 11$ which have exactly six elements of order $3$, $n=12$. Then $G\cong A_4, D_{12}$ or $L=\langle a, b ~;~ a^6=1, a^3=b^2, b^{-1}ab=a^{-1}$. Since $G$ has exactly six elements of order $3$, $G\notin \{A_4, D_{12}\}$. On the other hand, in $L$, $O(b)=4$, which contradicts the fact that $G$ is an acceptable group.
\newline
{\bf B.} For each $u\in V(\overline{\Gamma}_G)$, $deg(u)\leq 2$.
\newline
Assume that $u,v\in V(\overline{\Gamma}_G)$ such that $deg_{\overline{\Gamma}_G}(u)=deg_{\overline{\Gamma}_G}(v)=2$. If $u$ and $v$ are not adjacent in $\overline{\Gamma}_G$, then $\gamma(\overline{\Gamma}_G)<n-4$, which is wrong. Also, if $u$ and $v$ are adjacent in $\overline{\Gamma}_G$, then by Lemmas \ref{53} and \ref{1037}, $u,v\in V(K_3)$. So $\overline{\Gamma}_G$ is isomorphic to a union of some copies of $K_3$, $P_2$ and isolated vertices. If $\overline{\Gamma}_G$ have more than one copy of $K_3$ as a component, then $\gamma(\overline{\Gamma}_G)\leq n-5$, which is a contradiction.
\newline
Since $\gamma(\overline{\Gamma}_G)=n-4$, $\overline{\Gamma}_G$ is isomorphic to a union of one copy of $C_3$, one copy of $P_2$ and isolated vertices. Suppose that $V(K_3)=\{u_1, u_2, u_3\}$, $V(P_2)=\{v_1, v_2\}$ and $x_i$ is isolated vertex, for $1\leq i\leq n-6$.
By Lemma \ref{63}, we can assume that $O(u_1)\neq 2$. If $O(u_1)=3$ and $u_2=u_1^{-1}$, then $O(u_3)=2$. Since $\langle u_1, u_3\rangle$ is cyclic, there exist an element of order six in $G$ which is a contradiction. Hence $O(u_1)=4$ and so $u_2=u_1^2$ and $u_3=u_1^3$. On the other hand, by Lemma \ref{58}, $O(v_1)=O(v_2)=3$ and by Lemma \ref{56}, $O(x_i)=2$. Therefore $n=2^{\ell}\times 3$.
 \newline
 By Sylow theorem, there exist $H,K\leq G$ such that $|H|=2^{\ell}$ and $|K|=3$. Also $K=\{e, v_1, v_1^{-1}\}$ and $Z(H)\neq 1$. Obviously, for each $h\in G\setminus \{u_1, u_3\}$, $O(hv_1h)=O(v_1)=3$. Thus $hv_1h=v_1$ or $v_1^{-1}$. If $hv_1h=v_1$, then $v_1h=hv_1$ and so $O(v_1h)=6$, which is a contradiction. So
 \begin{equation}\label{178}
  hv_1h=v_1^{-1}~ or ~v_1h=hv_1^{-1}~ or ~hv_1=v_1^{-1}h.
 \end{equation}
Assume that $z\in Z(H)$ and $O(z)=2$. Also let $h\in G\setminus \{u_1, u_3, z\}$. By equalities \ref{178}, we have
\begin{equation*}
  (zh)v_1=(hz)v_1=h(zv_1)=h(v_1^{-1}z)=(hv_1^{-1})z=(v_1h)z=v_1(zh).
\end{equation*}
Hence $zh\in C(v_1)$. On the other hand, $O(zh)=2$ and $C(v_1)=\{e, v_1, v_1^{-1}\}$. So $zh\in \{v_1, v_1^{-1}\}$, which is a contradiction.
\newline
{\bf C.} There exist $u\in V(\overline{\Gamma}_G)$ such that $deg_{\overline{\Gamma}_G}(u)=3$ and $N_{\overline{\Gamma}_G}(u)=\{x,y,z\}$.
{\bf C.1.} If $O(u)=2$, then by Lemma \ref{63}, for each $v\in N(u)$, $O(v)=3$ or $4$. If $O(v)=3$, then $\langle u,v\rangle$ has an element of order six which is false. Thus $O(x)=4$. If $x^{-1}\notin N(u)$, then $\langle x^{-1}, u\rangle$ is cyclic and so $u$ and $x^{-1}$ are adjacent in $\overline{\Gamma}_G$ which is wrong. Hence $x^{-1}\in N(u)$. Without loss of generality, let $y=x^{-1}$. Then $z^{-1}=z$ and so $O(z)=2$, which contradicts Lemma \ref{63}.
\newline
{\bf C.2.} Assume that $O(u)=3$ and without loss of generality, let $x=u^{-1}$. By Observation \ref{6558}, $N_{\overline{\Gamma}_G}(x)=N_{\overline{\Gamma}_G}(u)$. We claim that $z=y^{-1}$. For proof let $z\neq y^{-1}$. Since $\langle u, y^{-1}\rangle$ is cyclic, $u$ is adjacent to $y^{-1}$ in $\overline{\Gamma}_G$, which is false. Hence $z=y^{-1}$, as claimed and so $y$ is adjacent to $z$ in $\overline{\Gamma}_G$. Therefore induced subgraph on $N[u]$ is isomorphic to $K_4$. So $\overline{\Gamma}_G\cong K_4\cup (n-5)K_1$. By Theorem \ref{66}, $G\cong D_{10}$.
\newline
{\bf C.3.} Assume that $O(u)=4$ and without loss of generality, let $x=u^2$ and $y=u^3$. We have
\begin{equation*}
  \langle u^2,z\rangle\leq \langle u,z\rangle.
\end{equation*}
Since $\langle u,z\rangle$ is cyclic, $\langle u^2,z\rangle$ is cyclic, too. If $O(z)=2$, then $\langle u^2,z\rangle$ is Klein four group, which is wrong. Also if $O(z)=3$, then $\langle u^2,z\rangle$ has an element of order six, which is a contradiction.
Suppose that $O(z)=4$ or $5$. By Observation \ref{6558}, $N_{\overline{\Gamma}_G}(z)=N_{\overline{\Gamma}_G}(z^{-1})$. Since $u\in N_{\overline{\Gamma}_G}(z)$, $u\in N_{\overline{\Gamma}_G}(z^{-1})$. So $z^{-1}\in \{u^2, u^3, z\}$. Since $O(z)=4$, $z^{-1}\notin \{u^2, z\}$. Hence $z^{-1}=u^3$ and so $z=u$, which is a contradiction.
\newline
{\bf C.4.} Suppose that $O(u)=5$. Then induced subgraph on $N_{\overline{\Gamma}_G}[u]$ is isomorphic to $K_4$. Hence $\overline{\Gamma}_G\cong K_4\cup (n-5)K_1$. By Theorem \ref{66}, $G\cong D_{10}$.
\newline
The proof of converse is straightforward.
\end{proof}

%------------------------------------------------------------------------------------%

\section*{Acknowledgement} % A simple way to write an acknowledgement

We thank Professor Saeed Akbari\footnote{Department of Mathematical Sciences, Sharif University of Technology, P.O.Box: 11365-9415, Tehran, Iran. Email: $s_{-}akbari@sharif.edu$} for his comments and expert of our paper.

\bibliography{mmnsample}
\bibliographystyle{mmn}
%------------------------------------------------------------------------------------%

\end{document}